\newtheorem{theorem}{Theorem}[section]
\newtheorem{lemma}[theorem]{Lemma}
\newtheorem{corollary}[theorem]{Corollary}
\newtheorem{conjecture}[theorem]{Conjecture}
\newtheorem{notation}[theorem]{Notation}
\newtheorem{example}[theorem]{Example}
\newtheorem{definition}[theorem]{Definition}
\title{$\sigma$-Sets and $\sigma$-Antisets}
\author{  {\bf   Ivan Gatica and Alfonso Bustamante }
	\\ \\
	Institute of Mathematics, Pontifical Catholic University of Valparaiso,\\
	Valparaiso, Chile\\
	{\small ivan.gatica.a@mail.pucv.cl}\\    
    Department of Informatics, Technological University of Chile,\\ Apoquindo 7282, Santiago, Chile,\\
	{\small alfonso.bustamante02@inacapmail.cl}\\  	
}
\begin{document}
\maketitle

\begin{abstract}
 In this paper we present a brief study of the $\sigma$-set-$\sigma$-antiset duality that occurs in $\sigma$-set theory and we also present the development of the integer space $3^{A}=\left\langle 2^{A}, 2^{A^{-}} \right\rangle$ for the cardinals $|A|=2,3$ together with its algebraic properties. In this article, we also develop a presentation of some of the properties of fusion of $\sigma$-sets and finally we present the development and definition of a type of equations of one $\sigma$-set variable.
\end{abstract}

\section{$\sigma$-Sets and $\sigma$-Antisets}

As we have seen in \cite{Gatica}, an $\sigma$-antiset is defined as follows:

\begin{definition} Let $A$ be a $\sigma$-set, then $B$ is said to be the $\sigma$-antiset of $A$ if and only if $A\oplus B=\emptyset$, where $\oplus$ is the fusion of $\sigma$-sets.
\end{definition}
We must observe that given the definition of the fusion operator $\oplus$ in \cite{Gatica} it is clear that it is commutative and therefore if $B$ is an $\sigma$-antiset of $A$, then it will be necessary that $A$ is also the $\sigma$-antiset of $B$. On the other hand, following the Blizard notation, \cite{Blizard} p. 347, we will denote $B$ the $\sigma$-antiset of $A$ as $B=A^{-}$, in this way we will have $A=(A^{-})^{-}$.

Continuing with the development of the $\sigma$-sets we have constructed three primary $\sigma$-sets, which are:

\begin{center}
\begin{tabular}{|l|l|}
\hline
Natural Numbers     & $\mathds{N}=\{1,2,3,4,5,6,7,8,9,10,\ldots\}$ \\
\hline
$0$-Natural Numbers & $\mathds{N}^{0}=\{1_{0},2_{0},3_{0},4_{0},5_{0},6_{0},7_{0},8_{0},9_{0},10_{0},\ldots\}$ \\
\hline
Antinatural Numbers & $\mathds{N}^{-}=\{1^{\ast},2^{\ast},3^{\ast},4^{\ast},5^{\ast},6^{\ast},7^{\ast},8^{\ast},9^{\ast},10^{\ast},\ldots\}$\\
\hline
\end{tabular}
\end{center}
where $1=\{\alpha\}$, $1_{0}=\{\emptyset\}$ and $1^{\ast}=\{\omega\}$, we must clarify that we have changed the letter $\beta$ for the letter $\omega$ for symmetry reasons, we must also remember that:
$$\ldots\in\alpha_{-2}\in\alpha_{-1}\in\alpha\in\alpha_{1}\in\alpha_{2}\ldots$$
and
$$\ldots\in\omega_{-2}\in\omega_{-1}\in\omega\in\omega_{1}\in\omega_{2}\in\ldots$$
where both $\epsilon$-chains have the linear $\epsilon$-root property and are totally different, i.e. they do not have a link-intersection. These definitions can be found in \cite{Gatica} Definition 3.13, 3.14 and 3.16.

On the other hand, we must remember the definition of the space generated by two $\sigma$-sets $A$ and $B$ which is:
\begin{definition}\label{Df CG}
    Let $A$ and $B$ be two $\sigma$-sets. The \textbf{Generated space by $A$ and $B$ } is given by 
    $$ \left\langle 2^{A},2^{B}\right\rangle=\{x\oplus y : x\in 2^{A}\wedge y\in 2^{B}\},$$
    where $\oplus$ is the fusion operator.
\end{definition}
Let us recall a few things about the fusion operator $\oplus$. In this brief analysis, we must observe that given $x$, $y$ two $\sigma$-sets, if $\{x\}\cup\{y\}=\emptyset$ then it will be said that $y$ is the antielement of $x$ and $x$ the antielement of $y$,
where the union of pairs $\cup$ axiomatized within the theory of $\sigma$-sets is used, in particular in the completion axioms A and B, which we will call annihilation axioms from now on. 
\begin{notation}
    Let $x$ be an element of some $\sigma$-set, then we will denote by $x^{\ast}$ the anti-element of $x$, if it exists.
\end{notation}
Now we move on to define the new operations with $\sigma$-sets which will help us define the fusion of $\sigma$-sets $\oplus$.
\begin{definition}
    Let $A$ and $B$ be two $\sigma$-sets, then we define the $\ast$-intersection of $A$ with $B$ by
    $$A\widehat{\cap} B =\{x\in A : x^{\ast}\in B \}.$$  
\end{definition}

\begin{example}
Let $A=\{1,2,3^{\ast},4\}$ and $B=\{2,3,4^{\ast}\}$ be two $\sigma$-sets, then we have that:
$$A\widehat{\cap}B=\{3^{\ast},4\}$$
and
$$B\widehat{\cap}A=\{3,4^{\ast}\},$$
it is clear that the $\ast$-intersection operator is not commutative.
\end{example}

\begin{theorem}\label{T IA}
 Let $A$ be a $\sigma$-set, then $A\widehat{\cap} A=\emptyset$.
\end{theorem}
\begin{proof}
    Let $A$ be a $\sigma$-set, by definition we will have that
$$A\widehat{\cap}A=\{x\in A: x^{\ast}\in A\}.$$
Suppose now that $A\widehat{\cap}A\neq\emptyset $, then there exists an $x\in A$ such that $x^{\ast}\in A$, therefore we will have that $x,x^{\ast}\in A$, which is a contradiction with Theorem 3.39 (Exclusion of inverses) from \cite{Gatica}, so if $A$ is a $\sigma$-set then
$$A\widehat{\cap}A=\emptyset.$$ 
           
\end{proof}

\begin{example}
    Let $A=\{1,2,3^{*},4\}$, then 
    $$A\widehat{\cap}A=\{1,2,3^{*},4\}\widehat{\cap}\{1,2,3^{*},4\},$$
    $$A\widehat{\cap}A=\{x\in\{1,2,3^{*},4\}: x^{*}\in\{1,2,3^{*},4\}\}, $$
    $$A\widehat{\cap}A=\emptyset.$$

\end{example}

Regarding Theorem \ref{T IA}, we can observe that given a $\sigma$-set $A$, the $\sigma$-set theory does not allow the coexistence of a $\sigma$-element $x$ and its $\sigma$-antielement in the same $\sigma$-set $A$, and this is because $A$ is a $\sigma$-set. However, since $\sigma$-set theory is a $\sigma$-class theory, one can find the $\sigma$-elements together with the $\sigma$-antielements coexisting without problems in what we call the proper $\sigma$-class, in this way one will have that $\{x,x^{\ast}\}$ is a proper $\sigma$-class and not a $\sigma$-set.

\begin{theorem}\label{T IV}
 Let $A$ be a $\sigma$-set, then $A\widehat{\cap}\emptyset=\emptyset$ and $\emptyset\widehat{\cap}A=\emptyset$.
\end{theorem}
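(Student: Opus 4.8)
The plan is to unwind the definition of the $\ast$-intersection in each of the two cases and observe that the defining condition is vacuous because the empty set has no elements. For the first claim, $A\widehat{\cap}\emptyset=\{x\in A: x^{\ast}\in\emptyset\}$; since no $x$ can satisfy $x^{\ast}\in\emptyset$ (the empty set contains no $\sigma$-elements), the comprehension yields the empty $\sigma$-set. For the second claim, $\emptyset\widehat{\cap}A=\{x\in\emptyset: x^{\ast}\in A\}$; here the base $\sigma$-set over which we range is already empty, so regardless of the condition the result is $\emptyset$.

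First I would state that by the definition of $\widehat{\cap}$ we have $A\widehat{\cap}\emptyset=\{x\in A: x^{\ast}\in\emptyset\}$, then argue by contradiction exactly as in the proof of Theorem \ref{T IA}: if this set were nonempty there would be some $x\in A$ with $x^{\ast}\in\emptyset$, contradicting the fact that $\emptyset$ has no elements. Then I would do the symmetric step: $\emptyset\widehat{\cap}A=\{x\in\emptyset: x^{\ast}\in A\}$, and again nonemptiness would force the existence of some $x\in\emptyset$, a contradiction. Alternatively one could simply invoke a basic property of separation/comprehension in $\sigma$-set theory to conclude both equalities immediately.

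I do not expect any real obstacle here; the only point requiring a sliver of care is making sure the comprehension used to define $\widehat{\cap}$ is legitimate when one of the arguments is $\emptyset$ — but since $\emptyset$ is a $\sigma$-set and the definition of $\widehat{\cap}$ was already stated for arbitrary $\sigma$-sets $A$ and $B$, this is immediate. I would keep the proof to three or four lines, mirroring the style of the proof of Theorem \ref{T IA}, and perhaps remark that both identities also follow from the fact that the empty $\sigma$-set is absorbing for any separation-type operation on $\sigma$-sets.
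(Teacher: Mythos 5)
Your argument is correct and follows essentially the same route as the paper: unwind the definition of $\widehat{\cap}$, note that $x^{\ast}\in\emptyset$ is impossible for the first identity, and that the comprehension ranges over the empty $\sigma$-set for the second (the paper phrases this last step as $\emptyset\widehat{\cap}A\subseteq\emptyset$, which is the same observation). No gaps.
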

\begin{proof}
    Let $A$ be a $\sigma$-set, by definition we will have that
$$A\widehat{\cap}\emptyset=\{x\in A: x^{\ast}\in \emptyset\}.$$
Now suppose that $A\widehat{\cap}\emptyset\neq\emptyset $, then there exists an $x\in A$ such that $x^{\ast}\in\emptyset$, which is a contradiction, hence $A\widehat{\cap}\emptyset=\emptyset $.
On the other hand, $\emptyset\widehat{\cap}A\subseteq\emptyset $ thus we will have to $\emptyset\widehat{\cap}A=\emptyset$.
\end{proof}

On the other hand, we will define the $\ast$-difference between $\sigma$-sets, a fundamental operation to be able to define the fusion between $\sigma$-sets.

\begin{definition}
    Let $A$ and $B$ be two $\sigma$-sets, then we define the $\ast$-difference between $A$ y $B$ by 
    $$ A\divideontimes B =A - (A\widehat{\cap} B),$$
    where $A-B=\{x\in A : x\notin B\}.$
\end{definition}

\begin{example}
Let $A=\{1,2,3^{\ast},4\}$ and $B=\{2,3,4^{\ast}\}$, then we have that:
$$A\widehat{\cap}B=\{3^{\ast},4\},$$
therefore
$$ A\divideontimes B =A - (A\widehat{\cap} B)=\{1,2,3^{\ast},4\}-\{3^{\ast},4\}=\{1,2\}$$
$$ A\divideontimes B =\{1,2\}.$$
We also have to
$$B\widehat{\cap}A=\{3,4^{\ast}\}$$
therefore 
$$ B\divideontimes A =B - (B\widehat{\cap} A)=\{2,3,4^{\ast}\}-\{3,4^{\ast}\}=\{2\}$$
$$ B\divideontimes A =\{2\}.$$
\end{example}

\begin{corollary}\label{C DA}
    Let $A$ be a $\sigma$-set. Then $A\divideontimes A=A$.
\end{corollary}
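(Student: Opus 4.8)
The plan is to reduce the claim directly to Theorem \ref{T IA} via the definition of the $\ast$-difference. By definition, $A \divideontimes A = A - (A \widehat{\cap} A)$, so the entire computation hinges on knowing what $A \widehat{\cap} A$ is.

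First I would invoke Theorem \ref{T IA}, which asserts that for any $\sigma$-set $A$ one has $A \widehat{\cap} A = \emptyset$. Substituting this into the definition gives $A \divideontimes A = A - \emptyset$. Then I would unfold the definition of the $\ast$-difference's underlying set difference, namely $A - B = \{x \in A : x \notin B\}$, applied with $B = \emptyset$: this yields $A - \emptyset = \{x \in A : x \notin \emptyset\}$. Since no $x$ belongs to $\emptyset$, the condition $x \notin \emptyset$ is vacuously satisfied by every element of $A$, so $\{x \in A : x \notin \emptyset\} = A$. Chaining these equalities gives $A \divideontimes A = A$.

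I do not expect any genuine obstacle here: the statement is an immediate corollary, and the only subtlety worth spelling out is that $A - \emptyset = A$ requires the (trivial) observation that $x \notin \emptyset$ holds for all $x$, which is exactly the defining property of the empty $\sigma$-set. If one wanted to be maximally careful, one could note that this last step does not depend on $A$ being a $\sigma$-set at all — it is a property of the set-difference operation — so the substantive content of the corollary is entirely carried by Theorem \ref{T IA}, i.e. by the exclusion-of-inverses principle (Theorem 3.39 of \cite{Gatica}) that forbids $x$ and $x^{\ast}$ from lying in the same $\sigma$-set.
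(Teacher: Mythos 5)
Your proposal is correct and follows exactly the same route as the paper's own proof: invoke Theorem \ref{T IA} to get $A\widehat{\cap}A=\emptyset$ and then observe that $A-\emptyset=A$. The extra remark about the vacuous condition $x\notin\emptyset$ merely spells out a step the paper leaves implicit.
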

\begin{proof}
Let $A$ be a $\sigma$-set, then by Theorem \ref{T IA} we will have that $A\widehat{\cap}A=\emptyset$
therefore
$$A\divideontimes A=A - (A\widehat{\cap} A)=A-\emptyset =A.$$ 

\end{proof}

\begin{corollary}\label{C DV}
    Let $A$ be a $\sigma$-set. Then $A\divideontimes\emptyset=A$ and $\emptyset\divideontimes A=\emptyset$.
\end{corollary}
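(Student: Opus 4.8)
The plan is to prove each equality directly from the definition of the $\ast$-difference $A \divideontimes B = A - (A \widehat{\cap} B)$, reducing each case to the behavior of the $\ast$-intersection with the empty set, which is already settled by Theorem \ref{T IV}.

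First I would handle $A \divideontimes \emptyset = A$. By definition, $A \divideontimes \emptyset = A - (A \widehat{\cap} \emptyset)$. By Theorem \ref{T IV}, $A \widehat{\cap} \emptyset = \emptyset$, so $A \divideontimes \emptyset = A - \emptyset = A$, using that set difference with $\emptyset$ leaves a $\sigma$-set unchanged (every $x \in A$ trivially satisfies $x \notin \emptyset$). Second, for $\emptyset \divideontimes A = \emptyset$, the definition gives $\emptyset \divideontimes A = \emptyset - (\emptyset \widehat{\cap} A)$. By Theorem \ref{T IV} again, $\emptyset \widehat{\cap} A = \emptyset$, so $\emptyset \divideontimes A = \emptyset - \emptyset = \emptyset$; alternatively one can note $\emptyset \divideontimes A \subseteq \emptyset$ immediately since $A - B \subseteq A$ for any $B$.

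I do not anticipate any real obstacle here: this corollary is essentially an immediate consequence of Theorem \ref{T IV} together with the trivial facts $X - \emptyset = X$ and $\emptyset - X = \emptyset$ about the ordinary set difference used inside the definition of $\divideontimes$. The only point requiring a word of care is making explicit that the inner $\ast$-intersection collapses to $\emptyset$ in both orderings of the arguments, which is exactly what Theorem \ref{T IV} provides; after that substitution the two identities follow by a one-line computation each, in parallel with the proof of Corollary \ref{C DA}.

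\begin{proof}
Let $A$ be a $\sigma$-set. By Theorem \ref{T IV} we have $A \widehat{\cap} \emptyset = \emptyset$, hence
$$A \divideontimes \emptyset = A - (A \widehat{\cap} \emptyset) = A - \emptyset = A.$$
On the other hand, again by Theorem \ref{T IV} we have $\emptyset \widehat{\cap} A = \emptyset$, so
$$\emptyset \divideontimes A = \emptyset - (\emptyset \widehat{\cap} A) = \emptyset - \emptyset = \emptyset.$$
\end{proof}
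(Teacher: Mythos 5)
Your proof is correct and follows essentially the same route as the paper's: both invoke Theorem \ref{T IV} to collapse the inner $\ast$-intersections to $\emptyset$ and then compute $A-\emptyset=A$ and $\emptyset-\emptyset=\emptyset$ directly from the definition of $\divideontimes$. Nothing is missing.
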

\begin{proof}
Let $A$ be a $\sigma$-set, then by Theorem \ref{T IV} we will have that $A\widehat{\cap}\emptyset=\emptyset\widehat{\cap}A=\emptyset$
therefore  
$$A\divideontimes\emptyset=A - (A\widehat{\cap} \emptyset)=A-\emptyset =A$$ 
and 
$$\emptyset\divideontimes A=\emptyset - (\emptyset\widehat{\cap} A)=\emptyset-\emptyset =\emptyset.$$

\end{proof}

Now after defining the $\ast$-intersection and the $\ast$-difference we can define the fusion of $\sigma$-sets as follows:
\begin{definition}
    Let $A$ and $B$ be two $\sigma$-sets, then we define the fusion of $A$ and $B$ by 
    $$ A\oplus B =\{ x : x\in A\divideontimes B \vee x\in B\divideontimes A\}.$$
\end{definition}
It is clear that the fusion of $\sigma$-sets is commutative by definition. Now, let us show an example
\begin{example}
Let $A=\{1,2,3^{\ast},4\}$ y $B=\{2,3,4^{\ast}\}$, then we have that:
$$ A\oplus B =\{ x : x\in A\divideontimes B \vee x\in B\divideontimes A\},$$
$$A\oplus B =\{x : x\in\{1,2\}\vee x\in\{2\}  \}, $$
$$A\oplus B=\{1,2\}, $$
therefore we have that 
$$\{1,2,3^{\ast},4\}\oplus\{2,3,4^{\ast}\}=\{2,3,4^{\ast}\}\oplus \{1,2,3^{\ast},4\} =\{1,2\}.$$

\end{example}

\begin{corollary}\label{C SA}
    Let $A$ be a $\sigma$-set, then $A\oplus A=A$.
\end{corollary}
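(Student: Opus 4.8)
The plan is to simply unfold the definition of the fusion operator in the special case $B=A$ and then invoke Corollary \ref{C DA}. Writing out $A\oplus A=\{x : x\in A\divideontimes A \vee x\in A\divideontimes A\}$, I observe that the two disjuncts coincide, so the defining condition reduces to the single clause $x\in A\divideontimes A$; hence $A\oplus A=A\divideontimes A$.

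Next I would apply Corollary \ref{C DA}, which already establishes $A\divideontimes A=A$ (itself a consequence of Theorem \ref{T IA}, since $A\widehat{\cap}A=\emptyset$ forces $A\divideontimes A=A-\emptyset=A$). Substituting this identity gives $A\oplus A=A$, completing the argument.

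Honestly, there is no real obstacle here: the statement is an immediate corollary of the earlier material, and the only thing to be careful about is the bookkeeping of the disjunction in the definition of $\oplus$ — one must note that $x\in A\divideontimes B\vee x\in B\divideontimes A$ degenerates to a single condition when $B=A$ rather than producing anything new. So the proof is a two-line computation chaining the definition of $\oplus$ with Corollary \ref{C DA}.

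\begin{proof}
Let $A$ be a $\sigma$-set. By definition of the fusion operator,
$$A\oplus A=\{x : x\in A\divideontimes A\vee x\in A\divideontimes A\}=\{x : x\in A\divideontimes A\}=A\divideontimes A.$$
By Corollary \ref{C DA} we have $A\divideontimes A=A$, and therefore $A\oplus A=A$.
\end{proof}
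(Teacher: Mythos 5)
Your proof is correct and follows essentially the same route as the paper: unfold the definition of $\oplus$ with $B=A$, collapse the repeated disjunct, and apply Corollary \ref{C DA}. The only cosmetic difference is that the paper substitutes $A\divideontimes A=A$ before simplifying the disjunction, whereas you simplify first.
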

\begin{proof}
Let $A$ be a $\sigma$-set, by definition we have that,
    $$A\oplus A=\{x: x\in A\divideontimes A \vee x\in A\divideontimes A\}.$$
    Now by corollary \ref{C DA}, we have that  
    $$A\oplus A=\{x: x\in A \vee x\in A\},$$
     $$A\oplus A=\{x: x\in A\},$$
     therefore it is clear that $A\subset A\oplus A$ and that $A\oplus A\subset A$, therefore $A\oplus A=A$.

\end{proof}

\begin{corollary}\label{C SV}
    Let $A$ be a $\sigma$-set, then $A\oplus\emptyset=\emptyset\oplus A=A$.
\end{corollary}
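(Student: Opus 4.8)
The plan is to mirror the structure of the proof of Corollary \ref{C SA}, using the definition of fusion together with Corollary \ref{C DV}. First I would unfold the definition of $\oplus$ at $A$ and $\emptyset$, writing
$$A\oplus\emptyset=\{x : x\in A\divideontimes\emptyset \vee x\in\emptyset\divideontimes A\}.$$
Then I would invoke Corollary \ref{C DV}, which gives $A\divideontimes\emptyset=A$ and $\emptyset\divideontimes A=\emptyset$, so the set-builder condition simplifies to $x\in A \vee x\in\emptyset$, i.e. just $x\in A$ since nothing belongs to $\emptyset$. Hence $A\oplus\emptyset=A$.

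For the other equality, I would either repeat the computation with the roles of $A$ and $\emptyset$ swapped, or simply appeal to the commutativity of $\oplus$ (noted immediately after its definition) to conclude $\emptyset\oplus A=A\oplus\emptyset=A$. Either way the two halves together give $A\oplus\emptyset=\emptyset\oplus A=A$.

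I do not expect any real obstacle here: the statement is an immediate corollary of Corollary \ref{C DV} and the definition of fusion, exactly parallel to how Corollary \ref{C SA} follows from Corollary \ref{C DA}. The only point requiring a word of care is making explicit that $x\in\emptyset$ is never satisfied, so that the disjunction in the set-builder notation collapses to the single condition $x\in A$; this is what pins down $A\oplus\emptyset$ as precisely $A$ rather than merely a superset of $A$.
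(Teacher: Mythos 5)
Your proposal is correct and follows essentially the same route as the paper: unfold the definition of $\oplus$, apply Corollary \ref{C DV} to reduce the disjunction to $x\in A$, and conclude; the paper handles $\emptyset\oplus A$ by repeating the computation rather than citing commutativity, but that is an immaterial difference.
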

\begin{proof}
   First we will show that $A\oplus\emptyset=A$. By definition we will have that,
$$A\oplus\emptyset=\{x: x\in A\divideontimes\emptyset \vee x\in\emptyset\divideontimes A\}.$$
Now by the corollary \ref{C DV}, we will have that
$$A\oplus\emptyset=\{x: x\in A \vee x\in\emptyset\},$$
$$A\oplus\emptyset=\{x: x\in A\},$$
from this it is clear that $A\subset A\oplus\emptyset$ and that $A\oplus\emptyset\subset A$, in this way $A\oplus\emptyset=A$.

Second, we will show that $\emptyset\oplus A=A$. By definition we will have that,
$$\emptyset\oplus A =\{x: x\in\emptyset\divideontimes A \vee x\in A\divideontimes\emptyset \}.$$
Now by the corollary \ref{C DV}, we will have that
$$\emptyset\oplus A =\{x: x\in\emptyset \vee x\in A\},$$
$$A\oplus\emptyset=\{x: x\in A\},$$
from this it is clear that $A\subset \emptyset\oplus A$ and that $\emptyset\oplus A\subset A$, in this way $\emptyset\oplus A=A$.
\end{proof}

\begin{theorem}\label{T fusion-union}
    Let $X$ be a $\sigma$-set, then for all $A,B\in 2^{X}$, we have that:
    $$A\oplus B = A\cup B,$$
    where $A\cup B=\{x: x\in A \vee x\in B\}.$
\end{theorem}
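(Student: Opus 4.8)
The plan is to reduce the fusion $A\oplus B$ to an ordinary union by showing that both $\ast$-difference terms collapse, i.e.\ $A\divideontimes B=A$ and $B\divideontimes A=B$. By the definition of $\divideontimes$, this amounts to proving $A\widehat{\cap}B=\emptyset$ and $B\widehat{\cap}A=\emptyset$, and the whole point is that the hypothesis $A,B\in 2^{X}$ forces $A$ and $B$ to live inside one and the same $\sigma$-set $X$.

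First I would argue by contradiction: suppose $A\widehat{\cap}B\neq\emptyset$, so there is some $x\in A$ with $x^{\ast}\in B$. Since $A\subseteq X$ and $B\subseteq X$, this gives $x\in X$ and $x^{\ast}\in X$, hence $x\in X\widehat{\cap}X$, contradicting Theorem \ref{T IA} (equivalently, the Exclusion of inverses, Theorem 3.39 of \cite{Gatica}). Therefore $A\widehat{\cap}B=\emptyset$, and the same argument with the roles of $A$ and $B$ exchanged yields $B\widehat{\cap}A=\emptyset$.

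From here the computation is immediate: $A\divideontimes B=A-(A\widehat{\cap}B)=A-\emptyset=A$ and likewise $B\divideontimes A=B$. Substituting into the definition of fusion,
$$A\oplus B=\{x: x\in A\divideontimes B\vee x\in B\divideontimes A\}=\{x: x\in A\vee x\in B\}=A\cup B,$$
which is the desired identity.

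The only delicate point is conceptual rather than technical: one must recognize that shared membership in the ambient $\sigma$-set $X$ is exactly what prevents an element and its antielement from occurring in $A$ and $B$ respectively, so that no annihilation can take place between the two $\sigma$-sets; once that observation is made, nothing else is needed.
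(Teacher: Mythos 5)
Your proof is correct and follows essentially the same route as the paper: establish $A\widehat{\cap}B=B\widehat{\cap}A=\emptyset$ via the exclusion-of-inverses result (Theorem 3.39 of \cite{Gatica}), deduce $A\divideontimes B=A$ and $B\divideontimes A=B$, and read off $A\oplus B=A\cup B$ from the definition of fusion. You merely spell out the appeal to the ambient $\sigma$-set $X$ in more detail than the paper does, which states the conclusion $A\widehat{\cap}B=\emptyset$ directly.
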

\begin{proof}
Let $X$ be a $\sigma$-set and $A,B\in 2^{X}$. Then, by theorem 3.39 of \cite{Gatica} we have that
$$ A\widehat{\cap} B = B\widehat{\cap} A =\emptyset , $$
in this way
$$A\divideontimes B=A \wedge B\divideontimes A=B. $$
Finally
$A\oplus B=\{x: x\in A \vee x\in B\}=A\cup B.$
\end{proof}

\begin{example}
    Let $X=\{1,2,3\}$, $A=\{1,2\}$ and $B=\{2,3\}$, it is clear that $A,B\in 2^{X}$. Now we apply the fusion operator $\oplus$.
    $$A\oplus B=\{x: x\in A\divideontimes B \vee x\in B\divideontimes A\},$$
    $$A\oplus B=\{x: x\in A\vee x\in B\},$$
    $$A\oplus B=A\cup B=\{1,2,3\}.$$
\end{example}

\begin{corollary}\label{C fusion-union}
     Let $X$ be a $\sigma$-set, then for all $A\in 2^{X}$, we have that:
    $$A\oplus X = X.$$
\end{corollary}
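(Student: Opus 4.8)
The plan is to deduce this directly from Theorem \ref{T fusion-union}, which already identifies the fusion operator with ordinary union on the power set $2^{X}$. The only mild preliminary is to note that $X$ itself belongs to $2^{X}$, since $X\subseteq X$ and $2^{X}$ is the collection of all sub-$\sigma$-sets of $X$; this is what lets us legitimately substitute $B=X$ into Theorem \ref{T fusion-union}.

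First I would fix a $\sigma$-set $X$ and an arbitrary $A\in 2^{X}$. Then I would invoke Theorem \ref{T fusion-union} with the pair $(A,X)$, both of which lie in $2^{X}$, to conclude $A\oplus X = A\cup X$. Next I would observe that $A\in 2^{X}$ means $A\subseteq X$, so for every $x$ we have $x\in A\vee x\in X$ iff $x\in X$; hence $A\cup X=\{x:x\in A\vee x\in X\}=\{x:x\in X\}=X$. Chaining the two equalities gives $A\oplus X = X$, as desired.

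There is essentially no serious obstacle here: the corollary is an immediate specialization of Theorem \ref{T fusion-union}. The one point worth stating carefully, rather than gliding over, is the membership $X\in 2^{X}$, since Theorem \ref{T fusion-union} is phrased only for elements of the power set; once that is granted, the absorption identity $A\cup X=X$ for $A\subseteq X$ finishes the argument in one line.
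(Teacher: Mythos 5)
Your proof is correct and follows exactly the same route as the paper's: apply Theorem \ref{T fusion-union} to the pair $(A,X)$ (using $X\in 2^{X}$) to get $A\oplus X=A\cup X$, then use $A\subseteq X$ to conclude $A\cup X=X$. Your explicit remark that $X\in 2^{X}$ is a small point the paper leaves implicit, but the argument is otherwise identical.
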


\begin{proof}
    Let $X$ be a $\sigma$-set and $A\in 2^{X}$. Then by theorem \ref{T fusion-union} we have that  
    $$A\oplus X = A\cup X.$$
    Now as $A\subset X$, then $A\cup X=X$, therefore 
    $$A\oplus X=X.$$
\end{proof}

\begin{example}
    Let $X=\{1,2,3,4\}$ and $A=\{1,2,3\}$, it is clear that $A\in 2^{X}$. Now we apply the fusion operator $\oplus$.
    $$A\oplus X=\{x: x\in A\divideontimes X \vee x\in X\divideontimes A\},$$
    $$A\oplus B=\{x: x\in A\vee x\in X\},$$
    $$A\oplus X=A\cup X=\{1,2,3,4\}=X.$$
\end{example}

As we said before, the fusion of $\sigma$-sets $\oplus$ is commutative by definition but as we demonstrated in \cite{Gatica,Bustamante16,Bustamante11} this operation is not associative.

\begin{example}\label{Ej no asociativo}
    Let $A=\{1^{\ast},2^{\ast}\}$, $B=\{1,2\}$ y $C=\{1\}$, then 
    $$(A\oplus B)\oplus C= \emptyset\oplus C=C  $$
and
    $$A\oplus (B\oplus C)= A\oplus B=\emptyset,  $$
therefore we have that  
$$(A\oplus B)\oplus C\neq A\oplus(B\oplus C).$$
\end{example}

\section{Generated space}

As we have already indicated in the definition \ref{Df CG} we will have that the space generated by two $\sigma$-sets $A$ and $B$ is: $$ \left\langle 2^{A},2^{B}\right\rangle=\{x\oplus y : x\in 2^{A}\wedge y\in 2^{B}\}.$$

Now taking into account the duality $\sigma$-set, $\sigma$-antiset we could consider the following example.

\begin{example}

We consider the $\sigma$-set $A=\{1,2,3\}$ and its $\sigma$-antiset $A^{-}=\{1^{\ast},2^{\ast},3^{\ast}\}$ then we obtain the integer space $3^{A}$ where,
$$3^{A}= \left\langle 2^{A}, 2^{A^{-}} \right\rangle.$$

Is important to observe that   
$$2^{A}=\{\emptyset, \{1\},\{2\},\{3\},\{1,2\},\{1,2\},\{2,3\},A \}$$
and
$$2^{A^{-}}=\{\emptyset^{-}, \{1^{\ast}\},\{2^{\ast}\},\{3^{\ast}\},\{1^{\ast},2^{\ast}\},\{1^{\ast},2^{\ast}\},\{2^{\ast},3^{\ast}\},A^{-} \}.$$

Also is important to observe that $\emptyset=\emptyset^{-}$, which is very important for the construction of $3^{A}$.

Now considering the definition of generated space,
$$3^{A}=\left\langle 2^{A}, 2^{A^{-}} \right\rangle=\{X\oplus Y : X\in 2^{A} \wedge Y \in 2^{A^{-}} \},$$
where the operator $\oplus$ is the fusion of $\sigma$-sets, we will obtain the following matrix:

\begin{table}
\centering
\begin{tabular}{||c||c|c|c|c|c|c|c|c||}
\hline
\hline
$\oplus$ & $\emptyset$ & $\{1\}$ & $\{2\}$ & $\{3\}$ & $\{1,2\}$ & $\{1,3\}$ & $\{2,3\}$ & $A$ \\\hline\hline

$\emptyset^{-}$ & $\emptyset^{0}_{0}$ & {\color{green}$\{1\}$} & {\color{green}$\{2\}$} & {\color{green}$\{3\}$} & {\color{red}$\{1,2\}$} & {\color{red}$\{1,3\}$} & {\color{red}$\{2,3\}$} & $A$  \\\hline

$\{1^{\ast}\}$     & {\color{green}$\{1^{\ast}\}$} & $\emptyset^{1}_{1}$ & {\color{cyan}$\{1^{\ast},2\}$} & {\color{cyan}$\{1^{\ast},3\}$} & {\color{green}$\{2\}$} & {\color{green}$\{3\}$} & {\color{blue}$\{1^{\ast},2,3\}$} & {\color{red}$\{2,3\}$} \\\hline

$\{2^{\ast}\}$     & {\color{green}$\{2^{\ast}\}$} & {\color{cyan}$\{1,2^{\ast}\}$} & $\emptyset^{2}_{1}$ & {\color{cyan}$\{2^{\ast},3\}$} & {\color{green}$\{1\}$} & {\color{yellow}$\{1,2^{\ast},3\}$} & {\color{green}$\{3\}$} & {\color{red}$\{1,3\}$} \\\hline

$\{3^{\ast}\}$     & {\color{green}$\{3^{\ast}\}$} & {\color{cyan}$\{1,3^{\ast}\}$} & {\color{cyan}$\{2,3^{\ast}\}$} & $\emptyset^{3}_{1}$ & {\color{magenta}$\{1,2,3^{\ast}\}$} & {\color{green}$\{1\}$} & {\color{green}$\{2\}$} & {\color{red}$\{1,2\}$} \\\hline

$\{1^{\ast},2^{\ast}\}$ & {\color{red}$\{1^{\ast},2^{\ast}\}$} & {\color{green}$\{2^{\ast}\}$} & {\color{green}$\{1^{\ast}\}$} & {\color{magenta}$\{1^{\ast},2^{\ast},3\}$} & $\emptyset^{4}_{2}$ &  {\color{cyan}$\{2^{\ast},3\}$} & {\color{cyan}$\{1^{\ast},3\}$} & {\color{green}$\{3\}$} \\\hline

$\{1^{\ast},3^{\ast}\}$ & {\color{red}$\{1^{\ast},3^{\ast}\}$} & {\color{green}$\{3^{\ast}\}$} & {\color{yellow}$\{1^{\ast},2,3^{\ast}\}$} & {\color{green}$\{1^{\ast}\}$} & {\color{cyan}$\{2,3^{\ast}\}$} & $\emptyset^{5}_{2}$ & {\color{cyan}$\{1^{\ast},3\}$} & {\color{green}$\{2\}$} \\\hline

$\{2^{\ast},3^{\ast}\}$ & {\color{red}$\{2^{\ast},3^{\ast}\}$} & {\color{blue}$\{1,2^{\ast},3^{\ast}\}$} & {\color{green}$\{3^{\ast}\}$} & {\color{green}$\{2^{\ast}\}$} & {\color{cyan}$\{1,3^{\ast}\}$} & {\color{cyan}$\{1,2^{\ast}\}$} & $\emptyset^{6}_{2}$ & {\color{green}$\{1\}$} \\\hline

$A^{-}$ & $A^{-}$ & {\color{red}$\{2^{\ast},3^{\ast}\}$} & {\color{red}$\{1^{\ast},3^{\ast}\}$} & {\color{red}$\{1^{\ast},2^{\ast}\}$} & {\color{green}$\{3^{\ast}\}$} & {\color{green}$\{2^{\ast}\}$} & {\color{green}$\{1^{\ast}\}$} & $\emptyset^{7}_{3}$ \\\hline\hline
\end{tabular}
\caption{\label{tab:matrix1}Integer Space.}
\end{table}

\end{example}

It is important to note that from the perspective of $\sigma$-sets we have that $\emptyset=\emptyset^{-}=\emptyset^{i}_{j}$ with $i\in\{0,1 ,2,3,4,5,6,7\}$ and $j\in\{0,1,2,3 \}$, where the difference of the $\sigma$-emptysets $\emptyset^{i}_{j}$ is given by annihilation, which comes from equation $A\oplus A^{-}=\emptyset$.

From the matrix representation of the integer space $3^{A}$, we can present another representation of the same integer space. This representation of the integer space $3^{A}$ is a graphical representation which we show in figure \ref{fig:Integer1}.

\begin{figure}
\centering
\includegraphics[width=0.6\linewidth]{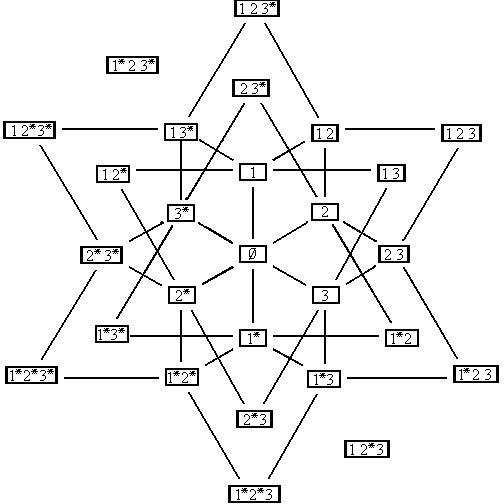}
\caption{\label{fig:Integer1} Integer Space $3^{A}$.}
\end{figure}

Finally, as a theoretical result, we have a cardinal theorem: 

\begin{theorem}
Let $A=\{1,2,3\}$, then $\left|3^{A}\right|=\left|\left\langle 2^{A}, 2^{A^{-}}\right\rangle\right|=3^{3}=27$.
\end{theorem}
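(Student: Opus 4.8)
The plan is to construct an explicit bijection between $3^{A}$ and the set of functions $p\colon\{1,2,3\}\to\{-1,0,1\}$, which has cardinality $3^{3}=27$. The first step is to pin down the ``universe'' of tokens that can appear in an element of $3^{A}$: if $X\in 2^{A}$ and $Y\in 2^{A^{-}}$, then $X\divideontimes Y\subseteq X\subseteq A$ and $Y\divideontimes X\subseteq Y\subseteq A^{-}$, so $X\oplus Y=(X\divideontimes Y)\cup(Y\divideontimes X)\subseteq A\cup A^{-}$. Hence every $Z\in 3^{A}$ is built only from the six tokens $1,2,3,1^{\ast},2^{\ast},3^{\ast}$, and by Theorem \ref{T IA} (exclusion of inverses) no $\sigma$-set contains both $i$ and $i^{\ast}$. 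This allows me to define, for $Z\in 3^{A}$, a function $p_{Z}\colon\{1,2,3\}\to\{-1,0,1\}$ by setting $p_{Z}(i)=1$ if $i\in Z$, $p_{Z}(i)=-1$ if $i^{\ast}\in Z$, and $p_{Z}(i)=0$ otherwise; the three cases are mutually exclusive exactly because of exclusion of inverses, so $p_{Z}$ is well defined.

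Next I would show $Z\mapsto p_{Z}$ is injective. Since $1,2,3$ are pairwise distinct $\sigma$-sets and are distinct from $1^{\ast},2^{\ast},3^{\ast}$ (their $\epsilon$-chains are totally different and share no link-intersection), the datum $p_{Z}$ records precisely which of the six tokens lie in $Z$; as $Z\subseteq A\cup A^{-}$, extensionality for $\sigma$-sets then forces $Z=Z'$ whenever $p_{Z}=p_{Z'}$.

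Then I would prove surjectivity. Given any $p\colon\{1,2,3\}\to\{-1,0,1\}$, put $X=\{\,i\in\{1,2,3\}: p(i)=1\,\}\in 2^{A}$ and $Y=\{\,i^{\ast}: i\in\{1,2,3\}\wedge p(i)=-1\,\}\in 2^{A^{-}}$. No index $i$ satisfies $p(i)=1$ and $p(i)=-1$ simultaneously, so $X\widehat{\cap}Y=Y\widehat{\cap}X=\emptyset$; consequently $X\divideontimes Y=X$ and $Y\divideontimes X=Y$, and therefore $X\oplus Y=X\cup Y$. Reading off the pattern of $X\cup Y$ gives $p_{X\oplus Y}=p$, so $p$ lies in the image.

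Combining the two directions, $Z\mapsto p_{Z}$ is a bijection from $3^{A}=\langle 2^{A},2^{A^{-}}\rangle$ onto a set of size $3^{3}=27$, which yields the claim. I expect the main obstacle to be the $\sigma$-set bookkeeping rather than the combinatorics: one must be careful that $A\cup A^{-}$ is only a proper $\sigma$-class (it violates exclusion of inverses) and hence cannot be invoked as an ambient $\sigma$-set in Theorem \ref{T fusion-union} — this is why I compute $X\oplus Y$ directly from the $\ast$-difference — and one must note that the distinct symbols $\emptyset^{i}_{j}$ in Table \ref{tab:matrix1} denote literally the single $\sigma$-set $\emptyset$, which is the image of the all-zero pattern and is therefore counted once. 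As a consistency check one may simply scan Table \ref{tab:matrix1} and verify that its $64$ entries realise exactly $27$ distinct $\sigma$-sets.
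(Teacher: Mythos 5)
Your proof is correct, but it is a genuinely different argument from the one in the paper. The paper's entire proof is the sentence ``the proof is the same fusion matrix for this $\sigma$-set'': it treats Table \ref{tab:matrix1} as an exhaustive computation of all $8\times 8=64$ fusions $X\oplus Y$ with $X\in 2^{A}$, $Y\in 2^{A^{-}}$, and leaves the reader to observe that exactly $27$ distinct $\sigma$-sets occur among the entries (the $8$ diagonal symbols $\emptyset^{i}_{j}$ all denoting the single $\sigma$-set $\emptyset$). You instead build an explicit bijection between $3^{A}$ and the $3^{3}$ sign patterns $p\colon\{1,2,3\}\to\{-1,0,1\}$, with well-definedness from exclusion of inverses (Theorem \ref{T IA}), injectivity from extensionality, and surjectivity from the direct computation $X\oplus Y=X\cup Y$ when $X\widehat{\cap}Y=Y\widehat{\cap}X=\emptyset$. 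Your care in avoiding Theorem \ref{T fusion-union} (since $A\cup A^{-}$ is only a proper $\sigma$-class and cannot serve as the ambient $\sigma$-set $X$ of that theorem) is exactly the right precaution, and your closing remark that the $\emptyset^{i}_{j}$ must be identified and counted once is the one point where a naive count of table entries would go wrong. What each approach buys: the paper's table is concrete and self-verifying for this one instance, whereas your bijection is uniform in $A$ and, essentially unchanged, proves the paper's subsequent conjecture that $\left|3^{A}\right|=3^{n}$ for any integer $\sigma$-set with $|A|=n$ --- so your argument is strictly stronger than what the paper records.
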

\begin{proof}
    Let $A=\{1,2,3\}$, the proof is the same fusion matrix for this $\sigma$-set.
\end{proof}

We should also note that we have obtained other cardinal results for the integer space $3^{A}$ with $\left|A \right|\in\{0,1,2,3,4,5\}$. The cardinal results are as follows:  

\begin{center}
\begin{tabular}{|c|c|c|c|}
\hline
$\sigma$-Set    &  $\sigma$-Antiset  & Generated & Cardinal \\
\hline
\hline
$A=\emptyset$     & $A^{-}=\emptyset^{-}$                                 & $\left\langle 2^{A},2^{A^{-}} \right\rangle$  & $3^{0}=1$  \\
\hline
$A=\{1\}$         & $A^{-}=\{1^{\ast}\}$                                     & $\left\langle 2^{A},2^{A^{-}} \right\rangle$  & $3^{1}=3$  \\
\hline
$A=\{1,2\}$       & $A^{-}=\{1^{\ast},2^{\ast}\}$                            & $\left\langle 2^{A},2^{A^{-}} \right\rangle$  & $3^{2}=9$   \\
\hline
$A=\{1,2,3\}$     & $A^{-}=\{1^{\ast},2^{\ast},3^{\ast}\}$                   & $\left\langle 2^{A},2^{A^{-}} \right\rangle$  & $3^{3}=27$  \\
\hline                   
$A=\{1,2,3,4\}$   & $A^{-}=\{1^{\ast},2^{\ast},3^{\ast},4^{\ast}\}$          & $\left\langle 2^{A},2^{A^{-}} \right\rangle$  & $3^{4}=81$  \\ 
\hline
$A=\{1,2,3,4,5\}$ & $A^{-}=\{1^{\ast},2^{\ast},3^{\ast},4^{\ast},5^{\ast}\}$ & $\left\langle 2^{A},2^{A^{-}} \right\rangle$  & $3^{5}=243$ \\ 
\hline
\end{tabular}
\end{center}

From these calculations made with the fusion matrix we can obtain the following conjecture.

\begin{conjecture}
   Let $A$ be a $\sigma$-set such that $|A|=n$, then $\left|3^{A}\right|=\left|\left\langle 2^{A}, 2^{A^{-}}\right\rangle\right|=3^{n}$. 
\end{conjecture}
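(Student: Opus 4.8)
The plan is to give an explicit description of the elements of $3^{A}=\left\langle 2^{A},2^{A^{-}}\right\rangle$ and then count them directly. Write $A=\{a_{1},\dots,a_{n}\}$. By the theory of $\sigma$-antisets in \cite{Gatica}, $A^{-}$ is again a $\sigma$-set and $A^{-}=\{a_{1}^{\ast},\dots,a_{n}^{\ast}\}$, where $a_{i}^{\ast}$ is the antielement of $a_{i}$ and $(a_{i}^{\ast})^{\ast}=a_{i}$; moreover $2^{A}$ and $2^{A^{-}}$ are the full power sets. I would first record that the $2n$ elements $a_{1},\dots,a_{n},a_{1}^{\ast},\dots,a_{n}^{\ast}$ are pairwise distinct: $a_{i}\neq a_{j}$ for $i\neq j$ because $|A|=n$; $a_{i}^{\ast}\neq a_{j}^{\ast}$ for $i\neq j$ since otherwise $a_{i}=(a_{i}^{\ast})^{\ast}=(a_{j}^{\ast})^{\ast}=a_{j}$; and $a_{i}\neq a_{j}^{\ast}$ for all $i,j$ since $a_{i}=a_{j}^{\ast}$ would put both $a_{j}$ and its antielement in the $\sigma$-set $A$, contradicting the Exclusion of inverses (Theorem 3.39 of \cite{Gatica}).

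Next I would compute a single fusion. Fix $X\in 2^{A}$ and $Y\in 2^{A^{-}}$ and set $I=\{i:a_{i}\in X\}$ and $J=\{i:a_{i}^{\ast}\in Y\}$. Unwinding the definitions of $\widehat{\cap}$ and $\divideontimes$ (using the distinctness just noted) gives $X\widehat{\cap}Y=\{a_{i}:i\in I\cap J\}$, $Y\widehat{\cap}X=\{a_{i}^{\ast}:i\in I\cap J\}$, hence $X\divideontimes Y=\{a_{i}:i\in I\setminus J\}$ and $Y\divideontimes X=\{a_{i}^{\ast}:i\in J\setminus I\}$, so that
$$X\oplus Y=\{a_{i}:i\in I\setminus J\}\cup\{a_{i}^{\ast}:i\in J\setminus I\}.$$
Call a $\sigma$-set $S$ a \emph{signed subset of $A$} if $S\subseteq A\cup A^{-}$ and $S$ never contains both $a_{i}$ and $a_{i}^{\ast}$, and let $T$ be the set of all of them. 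The display shows that every $X\oplus Y$ is a signed subset (the index sets $I\setminus J$ and $J\setminus I$ are disjoint), so $3^{A}\subseteq T$. Conversely, given $S\in T$, put $X=S\cap A\in 2^{A}$ and $Y=S\cap A^{-}\in 2^{A^{-}}$; then the associated index sets are disjoint, so the display gives $X\oplus Y=X\cup Y=S$. Hence $T\subseteq 3^{A}$, and $3^{A}=T$. (In particular the various $\emptyset^{i}_{j}$ appearing in Table~\ref{tab:matrix1} through annihilation are all the one signed subset $\emptyset$, contributing a single element.)

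Finally I would count $T$. Since the $2n$ elements above are distinct, a signed subset $S$ corresponds bijectively to the function $f_{S}\colon\{1,\dots,n\}\to\{+,0,-\}$ defined by $f_{S}(i)=+$ if $a_{i}\in S$, $f_{S}(i)=-$ if $a_{i}^{\ast}\in S$, and $f_{S}(i)=0$ otherwise: the signed-subset condition is exactly what makes $f_{S}$ well defined, and every $f\in\{+,0,-\}^{n}$ arises from $S=\{a_{i}:f(i)=+\}\cup\{a_{i}^{\ast}:f(i)=-\}$. Thus $S\mapsto f_{S}$ is a bijection $T\to\{+,0,-\}^{n}$ and $\left|3^{A}\right|=|T|=3^{n}$, upgrading the conjecture to a theorem. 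The main obstacle is the bookkeeping in the fusion computation: one must be certain that inside $X\oplus Y$ the only cancellations are the diagonal ones $a_{i}\leftrightarrow a_{i}^{\ast}$, which is precisely where the distinctness of the $2n$ elements — resting on the Exclusion of inverses and the involutivity of $(\,\cdot\,)^{\ast}$ from \cite{Gatica} — must be invoked; once that is pinned down, the rest is routine combinatorics generalising the $n=3$ fusion matrix.
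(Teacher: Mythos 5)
The paper does not prove this statement at all: it is left as a conjecture, supported only by explicit fusion-matrix computations for $|A|\in\{0,1,2,3,4,5\}$ (and the $|A|=3$ case is ``proved'' by exhibiting Table~\ref{tab:matrix1}). Your argument is therefore a genuinely different, and strictly stronger, route: instead of case-by-case verification you derive the closed form $X\oplus Y=\{a_{i}:i\in I\setminus J\}\cup\{a_{i}^{\ast}:i\in J\setminus I\}$ for $X\in 2^{A}$, $Y\in 2^{A^{-}}$, identify $3^{A}$ with the family of signed subsets of $A$, and count these by the bijection with $\{+,0,-\}^{n}$. I checked the fusion computation against the paper's definitions of $\widehat{\cap}$, $\divideontimes$ and $\oplus$ and it is correct; the identification of all the $\emptyset^{i}_{j}$ as a single element of $3^{A}$ matches the paper's own remark that $\emptyset=\emptyset^{i}_{j}$ as $\sigma$-sets; and the surjectivity step (every signed subset $S$ equals $(S\cap A)\oplus(S\cap A^{-})$) is what the paper's tables verify by hand. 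What your approach buys is that the conjecture becomes a theorem for all $n$, and the same computation immediately yields the paper's second conjecture ($2^{|A|}\cdot 3^{|B|}$ for the meta-space, since the $0$-natural elements have no antielements and contribute a factor $2$ each) as well as Conjecture~\ref{CJ BUCLE}. The only caveat is that your proof leans on background facts of $\sigma$-set theory that the paper uses informally rather than states precisely --- that $A^{-}=\{a_{1}^{\ast},\dots,a_{n}^{\ast}\}$ with $(\,\cdot\,)^{\ast}$ involutive, the Exclusion of inverses, and that every signed subset is a legitimate $\sigma$-set (the paper's own examples such as $\{1,2,3^{\ast},4\}$ confirm the last point) --- so the argument is sound relative to the framework of \cite{Gatica}, which is the best one can do here.
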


On the other hand, as we have already said, we are going to change the notation of $1_{\Theta}$ to $1_{0}$, in this way we will have the $\sigma$-set of $0$-natural numbers defined as follows:
\\
$1_{0}=\{\emptyset\}$\\
$2_{0}=\{\emptyset,1_{0}\}$\\
$3_{0}=\{\emptyset,1_{0},2_{0}\}$\\
$4_{0}=\{\emptyset,1_{0},2_{0},3_{0}\}$\\
and so on, forming the $0$-natural numbers
$$\mathds{N}^{0}=\{1_{0},2_{0},3_{0},4_{0},5_{0},6_{0},7_{0},8_{0},9_{0},10_{0},\ldots\},$$

where one of the important properties of this $\sigma$-set is that it does not annihilate with the natural numbers $\mathds{N}$ nor with the antinatural numbers $\mathds{N}^{-}$, in this way we can consider the following example for the generated space.

\begin{example}
    We consider the $\sigma$-sets $A=\{1_{0},2_{0}\}$ and $B=\{1,2\}$, therefore the space generated by $A\oplus B$ and $A\oplus B^{-}$ will be:

$\left\langle 2^{A\oplus B}, 2^{A\oplus B^{-}}\right\rangle=\{ x\oplus y : x\in 2^{A\oplus B}\wedge y\in 2^{A\oplus B^{-}}\} $

$\left\langle 2^{A\oplus B}, 2^{A\oplus B^{-}}\right\rangle=\{\emptyset, \{1_{0}\}, \{1\}, \{1^{\ast}\}, \{2_{0}\}, \{2\}, \{2^{\ast}\}, 
\{1_{0},2_{0}\}, \{1_{0},1\}, \{1_{0}, 1^{\ast}\},  \{1_{0},2\}, \{1_{0},2^{\ast}\}, \\ \{2_{0},1\}, \{2_{0}, 1^{\ast}\},  \{2_{0},2\}, \{2_{0},2^{\ast}\}, \{1,2\}, \{1, 2^{\ast}\},  \{1^{\ast},2\}, \{1^{\ast},2^{\ast}\},
\{1_{0},1,2\}, \{1_{0},1, 2^{\ast}\}, \{1_{0},1^{\ast},2\},\\ \{1_{0},1^{\ast},2^{\ast}\}, \{2_{0},1,2\}, \{2_{0},1, 2^{\ast}\}, \{2_{0},1^{\ast},2\}, \{2_{0},1^{\ast},2^{\ast}\}, \{1_{0},2_{0},1\}, \{1_{0},2_{0},1^{\ast}\}, \{1_{0},2_{0},2\}, \{1_{0},2_{0},2^{\ast}\},\\
 \{1_{0},2_{0},1,2\}, \{1_{0},2_{0},1, 2^{\ast}\}, \{1_{0},2_{0},1^{\ast},2\}, \{1_{0},2_{0},1^{\ast},2^{\ast}\}     \}$\\
 \\
In this case, the generated space becomes a meta-space generated by $A=\{1_{0},2_{0}\}$ and $B=\{1,2\}$ which can be ordered graphically as shown in figure \ref{fig:metaspace}.

\begin{figure}
\centering
\includegraphics[width=0.7\linewidth]{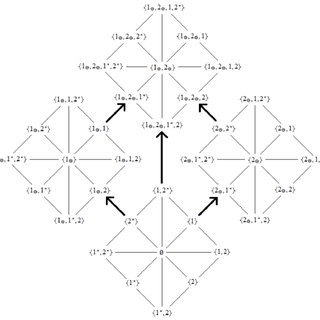}
\caption{\label{fig:metaspace} Meta-space $\left\langle 2^{A\oplus B}, 2^{A\oplus B^{-}}\right\rangle$.}
\end{figure}
    
\end{example}

Now, if we count the number of elements that the meta-space generated by $A=\{1_{0},2_{0}\}$ and $B=\{1,2\}$ has, we will find that they are 36, where the prime decomposition of this number is $36=2^{2}\cdot 3^{2}$ which is equivalent to the following multiplication of cardinals $36=2^{|A|}\cdot 3^{|B|}$, from where we can obtain the following conjecture:

\begin{conjecture}
    For all $A\in 2^{\mathds{N}^{0}}$ and $B\in 2^{\mathds{N}}$, then $\left| \left\langle 2^{A\oplus B}, 2^{A\oplus B^{-}}\right\rangle \right| = 2^{|A|}\cdot 3^{|B|} .$
\end{conjecture}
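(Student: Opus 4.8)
The plan is to reduce the computation of $\left|\left\langle 2^{A\oplus B}, 2^{A\oplus B^{-}}\right\rangle\right|$ to a straightforward combinatorial count by exploiting the fact that the $\epsilon$-chain generating $\mathds{N}^{0}$ has no link-intersection with those generating $\mathds{N}$ and $\mathds{N}^{-}$. First I would record the consequences of this non-annihilation property: since $A\in 2^{\mathds{N}^{0}}$ and $B\in 2^{\mathds{N}}$, no $\sigma$-element of $A$ has its antielement in $B\cup B^{-}$, and the only annihilating pairs available are $b$ with $b^{\ast}$ for $b\in B$. Hence $A\widehat{\cap}B=B\widehat{\cap}A=\emptyset$, and likewise with $B^{-}$ in place of $B$, so by the argument of Theorem \ref{T fusion-union} we get $A\oplus B=A\cup B$ and $A\oplus B^{-}=A\cup B^{-}$, both honest disjoint unions, with $|A\oplus B|=|A\oplus B^{-}|=|A|+|B|$. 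Consequently $2^{A\oplus B}$ and $2^{A\oplus B^{-}}$ are genuine power sets, so each $X\in 2^{A\oplus B}$ splits uniquely as $X=X_{A}\cup X_{B}$ with $X_{A}\subseteq A$, $X_{B}\subseteq B$, and each $Y\in 2^{A\oplus B^{-}}$ as $Y=Y_{A}\cup Y_{B}$ with $Y_{A}\subseteq A$, $Y_{B}\subseteq B^{-}$.

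Next I would analyze the fusion $X\oplus Y$ slot by slot. Because the annihilation axioms act only on a $\sigma$-element together with its antielement, and the $A$-block meets no antielements, one checks that $X\oplus Y$ restricted to $A$ equals $X_{A}\cup Y_{A}$, while restricted to $B\cup B^{-}$ it is obtained from $X_{B}\cup Y_{B}$ by deleting every pair $\{b,b^{\ast}\}$ with $b\in X_{B}$ and $b^{\ast}\in Y_{B}$. Writing $B=\{b_{1},\dots,b_{n}\}$ with $n=|B|$, the $\sigma$-set $X\oplus Y$ is therefore completely described by the pair $(S,f)$, where $S=X_{A}\cup Y_{A}\subseteq A$ and $f:\{1,\dots,n\}\to\{+,-,0\}$ records, for each $i$, whether $b_{i}$, $b_{i}^{\ast}$, or neither occurs in $X\oplus Y$; the state $f(i)=0$ arises exactly when $b_{i}\notin X_{B}\cup Y_{B}$, or when $b_{i}\in X_{B}$ and $b_{i}^{\ast}\in Y_{B}$, the latter producing one of the annihilation $\sigma$-emptyset slots which, as noted after Table \ref{tab:matrix1}, is identified with $\emptyset$.

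Then I would show the assignment $(X,Y)\mapsto(S,f)$ is surjective onto $2^{A}\times\{+,-,0\}^{\{1,\dots,n\}}$ and that distinct pairs $(S,f)$ yield distinct $\sigma$-sets. For surjectivity, given $(S,f)$ take $X=S\cup\{b_{i}:f(i)=+\}$ and $Y=\{b_{i}^{\ast}:f(i)=-\}$; then $X\in 2^{A\oplus B}$, $Y\in 2^{A\oplus B^{-}}$, and $X\oplus Y$ realizes $(S,f)$. For injectivity one uses that the three $\epsilon$-chains generating $\mathds{N}^{0}$, $\mathds{N}$, $\mathds{N}^{-}$ are totally different, so the symbols $a$, $b_{i}$, $b_{i}^{\ast}$ are pairwise distinct as $\sigma$-sets and any member of the generated space recovers its data $(S,f)$; the only collapse is among the annihilation emptysets, already absorbed into the single value $\emptyset$ in the $f(i)=0$ description. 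Hence $\left\langle 2^{A\oplus B}, 2^{A\oplus B^{-}}\right\rangle$ is in bijection with $2^{A}\times\{+,-,0\}^{n}$, which gives $\left|\left\langle 2^{A\oplus B}, 2^{A\oplus B^{-}}\right\rangle\right|=2^{|A|}\cdot 3^{|B|}$.

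The main obstacle, I expect, is the careful justification that the fusion operator genuinely acts "slot by slot", i.e. that the annihilation (completion) axioms A and B localize to individual element/antielement pairs and do not interact across different indices or with the $A$-block, together with the attendant bookkeeping that forces every annihilation $\sigma$-emptyset $\emptyset^{i}_{j}$ to coincide with $\emptyset$ so that the all-zero configuration is counted exactly once. Everything else is routine once the correspondence with $2^{A}\times\{+,-,0\}^{|B|}$ is in place; an alternative, should the slot-by-slot claim prove delicate, would be induction on $|A|+|B|$, removing one $\sigma$-element of $A$ (multiplying the count by $2$) or one $\sigma$-element of $B$ (multiplying it by $3$) at each step.
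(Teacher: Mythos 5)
The statement you are proving is left as a \emph{conjecture} in the paper: the authors offer no argument beyond computing the example $A=\{1_{0},2_{0}\}$, $B=\{1,2\}$, observing $36=2^{2}\cdot 3^{2}$, and extrapolating. Your proposal therefore goes well beyond the paper, and as far as the paper's definitions permit one to check, it is correct. The decisive points are all sound: since $0$-naturals admit no antielements and $\mathds{N}^{0}$ is disjoint from $\mathds{N}\cup\mathds{N}^{-}$, the $\ast$-intersections $A\widehat{\cap}B$, $B\widehat{\cap}A$ (and likewise with $B^{-}$) vanish, so $A\oplus B$ and $A\oplus B^{-}$ really are disjoint unions; and because $\widehat{\cap}$, $\divideontimes$ and $\oplus$ are all defined by elementwise membership conditions, the fusion $X\oplus Y$ is exactly $X_{A}\cup Y_{A}$ together with those $b_{i}$ in $X_{B}$ whose antielement is absent from $Y_{B}$ and those $b_{i}^{\ast}$ in $Y_{B}$ whose element is absent from $X_{B}$ --- which is precisely your $(S,f)$ encoding. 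Your surjectivity witness and the injectivity argument (pairwise distinctness of the symbols from the three $\epsilon$-chains, plus the paper's identification $\emptyset=\emptyset^{i}_{j}$) complete the bijection with $2^{A}\times\{+,-,0\}^{|B|}$. Two small caveats: your enumeration $B=\{b_{1},\ldots,b_{n}\}$ tacitly assumes $A$ and $B$ finite, whereas the conjecture as stated quantifies over all of $2^{\mathds{N}^{0}}$ and $2^{\mathds{N}}$ (though your bijection survives the infinite case if one reads $2^{|A|}\cdot 3^{|B|}$ as cardinal arithmetic); and the non-annihilation of $\mathds{N}^{0}$ with $\mathds{N}$ and $\mathds{N}^{-}$ is itself only asserted, not proved, in this paper, so your argument inherits that assumption. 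Note also that your construction immediately yields the paper's other cardinality conjecture ($|3^{A}|=3^{n}$) as the special case $A=\emptyset$, which is worth stating explicitly.
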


\begin{example}
    We consider $A=\{1_{0}\}$ and $B=\{1,2\}$, then we obtain that\\ 
    $\left\langle 2^{A\oplus B}, 2^{A\oplus B^{-}}\right\rangle=\{\emptyset, \{1_{0}\}, \{1\}, \{1^{\ast}\}, \{2\}, \{2^{\ast}\}, \{1_{0},1\}, \{1_{0},2\}, \{1_{0},1^{\ast}\}, \{1_{0},2^{\ast}\}, \{1,2\}, \{1,2^{\ast}\}, \\ 
     \{1^{\ast},2\}, \{1^{\ast},2^{\ast}\}, \{1_{0},1,2\}, \{1_{0},1,2^{\ast}\}, \{1_{0},1^{\ast},2\}, \{1_{0},1^{\ast},2^{\ast}\} \} $\\
Thus, we have that $|A|=1$ and $|B|=2$ and $\left| \left\langle 2^{A\oplus B}, 2^{A\oplus B^{-}}\right\rangle \right| = 2^{|A|}\cdot 3^{|B|}= 2^{1}\cdot 3^{2}=18.$
     
\end{example}

\begin{example}
    We consider $A=\emptyset$ and $B=\{1,2\}$, then we obtain that\\ 
    $3^{B} =\{\emptyset, \{1\}, \{1^{\ast}\}, \{2\}, \{2^{\ast}\}, \{1,2\}, \{1,2^{\ast}\}, \{1^{\ast},2\}, \{1^{\ast},2^{\ast}\} \} $\\
Thus, we have that $|A|=0$ and $|B|=2$ and $\left| \left\langle 2^{A\oplus B}, 2^{A\oplus B^{-}}\right\rangle \right| = 2^{|A|}\cdot 3^{|B|}= 2^{0}\cdot 3^{2}=9.$
     
\end{example}

\section{Algebraic structure of integer space $3^{A}$}
 With respect to the algebraic structure of the Integer Space $3^{A}$ for all $A\in 2^{\mathds{N}}$ we think that these structures are related with structures called NAFIL (non-associative finite invertible loops)

 \begin{theorem}\label{T Alg}
     Let $A=\{1,2\}$, then $(3^{A},\oplus)$ satisfies the following conditions:
\begin{enumerate}
    \item $(\forall X,Y\in 3^{A})(X\oplus Y\in 3^{A}),$
    \item $(\exists! \emptyset\in 3^{A})(\forall X\in 3^{A})(X\oplus \emptyset=\emptyset\oplus X=X),$
    \item $(\forall X\in 3^{A})(\exists! X^{-}\in 3^{A})(X\oplus X^{-}=X^{-}\oplus X=\emptyset),$
    \item $(\forall X,Y\in 3^{A})(X\oplus Y=Y\oplus X).$
\end{enumerate}
\end{theorem}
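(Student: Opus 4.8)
The plan is to first make the nine‑element set $3^{A}$ completely explicit, and then verify the four conditions in the order (4), (1), (2), (3), since each step uses the preceding description. Writing $A=\{1,2\}$ and $A^{-}=\{1^{\ast},2^{\ast}\}$, a direct computation of the $4\times 4$ fusion matrix of the columns $2^{A}=\{\emptyset,\{1\},\{2\},\{1,2\}\}$ against the rows $2^{A^{-}}=\{\emptyset,\{1^{\ast}\},\{2^{\ast}\},\{1^{\ast},2^{\ast}\}\}$, using that $\{i\}\oplus\{i^{\ast}\}=\emptyset$, shows that
$$3^{A}=\{\emptyset,\{1\},\{2\},\{1,2\},\{1^{\ast}\},\{2^{\ast}\},\{1^{\ast},2^{\ast}\},\{1,2^{\ast}\},\{1^{\ast},2\}\}.$$
The structural observation I would isolate first is that these are exactly the \emph{consistent} subsets of $\{1,2,1^{\ast},2^{\ast}\}$, i.e. those containing, for each index $i\in\{1,2\}$, at most one of $i$ and $i^{\ast}$; there are $3\times 3=9$ such subsets, matching the cardinality. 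Condition (4), commutativity, is then immediate from the definition of $\oplus$, whose defining formula $A\oplus B=\{x:x\in A\divideontimes B\vee x\in B\divideontimes A\}$ is visibly symmetric in $A$ and $B$ (and this was already noted right after the definition of fusion).

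For closure, condition (1), I would argue directly from the definitions rather than tabulating all $81$ products. If $X$ and $Y$ are consistent subsets of $\{1,2,1^{\ast},2^{\ast}\}$, then $X\divideontimes Y\subseteq X$ and $Y\divideontimes X\subseteq Y$, so $X\oplus Y\subseteq X\cup Y$; moreover the $\ast$-difference deletes from $X$ every $x$ whose antielement lies in $Y$ (and symmetrically), so whenever $i\in X$ and $i^{\ast}\in Y$ both occurrences are removed. Hence $X\oplus Y$ still contains at most one of $i$ and $i^{\ast}$ for each $i$, i.e. $X\oplus Y\in 3^{A}$. For completeness I would also include the full $9\times 9$ Cayley table, from which closure can simply be read off, but the consistent‑subset argument is what makes the statement transparent.

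Condition (2): $\emptyset=\emptyset\oplus\emptyset\in 3^{A}$, and by Corollary \ref{C SV} we have $X\oplus\emptyset=\emptyset\oplus X=X$ for every $X\in 3^{A}$, so $\emptyset$ is a two‑sided identity; uniqueness is the usual one‑line argument, since any other identity $e$ satisfies $e=e\oplus\emptyset=\emptyset$. For condition (3), given $X\in 3^{A}$ let $X^{-}$ be the consistent subset obtained by exchanging each $i$ with $i^{\ast}$; this again lies in $3^{A}$, every element of $X$ has its antielement in $X^{-}$ and conversely, so $X\widehat{\cap}X^{-}=X$ and $X^{-}\widehat{\cap}X=X^{-}$, whence $X\divideontimes X^{-}=X^{-}\divideontimes X=\emptyset$ and therefore $X\oplus X^{-}=\emptyset$.

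The one genuinely delicate point, and the place I expect to have to be careful, is \emph{uniqueness} of the inverse, since associativity is unavailable (it fails already by Example \ref{Ej no asociativo}), so one cannot invoke the standard group argument. I would prove it directly: $X\oplus Y=\emptyset$ forces both $X\divideontimes Y=\emptyset$ and $Y\divideontimes X=\emptyset$, that is $X=X\widehat{\cap}Y$ and $Y=Y\widehat{\cap}X$; the first says $x^{\ast}\in Y$ for every $x\in X$, the second says $y^{\ast}\in X$ for every $y\in Y$, and together these say exactly that $y\mapsto y^{\ast}$ is a bijection $Y\to X$, so $Y=\{x^{\ast}:x\in X\}=X^{-}$ is uniquely determined. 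Thus the bulk of the write‑up is the explicit description of $3^{A}$ and the consistent‑subset bookkeeping; conditions (2), (3), (4) then take only a few lines each, and the statement also shows that $(3^{A},\oplus)$ is a commutative loop, consistent with the NAFIL expectation mentioned above.
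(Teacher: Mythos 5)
Your proposal is correct, and it is substantially more complete than the paper's own argument. The paper proves this theorem essentially by pointing at the $4\times 4$ generating matrix (Table \ref{tab:matrix2}) and asserting that conditions (1)--(3) are ``clearly seen,'' with (4) noted as definitional; strictly speaking that table only displays products $X\oplus Y$ with $X\in 2^{A^{-}}$ and $Y\in 2^{A}$, so closure and uniqueness of inverses over all of $3^{A}$ are never actually checked there. You instead characterize $3^{A}$ structurally as the nine consistent subsets of $\{1,2,1^{\ast},2^{\ast}\}$ and derive closure from the observation that if $i$ survives in $X\divideontimes Y$ then $i^{\ast}\notin Y\supseteq Y\divideontimes X$, so no product can contain an element together with its antielement; this scales to arbitrary finite $A$ and would essentially prove Conjecture \ref{CJ BUCLE}, not just the $|A|=2$ case. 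Your direct proof of uniqueness of the inverse (from $X\oplus Y=\emptyset$ forcing $X=X\widehat{\cap}Y$ and $Y=Y\widehat{\cap}X$, hence $Y=\{x^{\ast}:x\in X\}$) is the right move, and you are correct that the standard group-theoretic uniqueness argument is unavailable here because $\oplus$ is not associative (Example \ref{Ej no asociativo}); the paper leaves this point entirely to table inspection. The trade-off is that the paper's approach is shorter and self-evidently finite, while yours buys generality and an actual verification of each clause.
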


 \begin{proof}
     Let $A=\{1,2\}$, then we quote the fusion matrix represented in table \ref{tab:matrix2} for $3^{\{1,2\}}$.

From here it is clearly seen that conditions $(1)$, $(2)$, and $(3)$ of theorem \ref{T Alg} are satisfied, where the condition $(4)$ is obvious by definition.

We must clarify that since $\sigma$-set $\emptyset=\emptyset^{-}$, and also $\emptyset=\emptyset^{0}_{0}=\emptyset^{1}_{1}=\emptyset^{2}_{1}=\emptyset^{3}_{2}$, from here we have condition $(2)$ and the difference is in another dimension, the dimension of annihilation. Here we must clarify that the fusion operation $\oplus$ is not associative. Let $X=\{1^{\ast},2^{\ast}\}$, $Y=\{1,2\}$ and $Z=\{1\}$ then we will have that
     $(\{1^{\ast},2^{\ast}\}\oplus\{1,2\})\oplus\{1\}=\emptyset\oplus\{1\}=\{1\}$
     
on the other hand

    $\{1^{\ast},2^{\ast}\}\oplus (\{1,2\}\oplus\{1\})=\{1^{\ast},2^{\ast}\}\oplus\{1,2\}=\emptyset$

therefore we have that 

$$(X\oplus Y)\oplus Z\neq X\oplus (Y\oplus Z), $$

which shows that the structure $(3^{A},\oplus)$, is non-associative.
    
\begin{table}
\centering
\begin{tabular}{||c||c|c|c|c||}
\hline
  $\oplus$      & $\emptyset$  & $\{1\}$ & $\{2\}$ & $\{1,2\}$  \\
\hline
\hline
   $\emptyset^{-}$ & $\emptyset^{0}_{0}$  &  $\{1\}$ & $\{2\}$ & $\{1,2\}$  \\
\hline
   $\{1^{\ast}\}$  & $\{1^{\ast}\}$  &  $\emptyset^{1}_{1}$ & $\{1^{\ast},2\}$   &  $\{2\}$  \\
\hline
   $\{2^{\ast}\}$  & $\{2^{\ast}\}$  & $\{1,2^{\ast}\}$ & $\emptyset^{2}_{1}$   & $\{1\}$   \\
\hline
   $\{1^{\ast},2^{\ast}\}$  & $\{1^{\ast},2^{\ast}\}$  &  $\{2^{\ast}\}$ &  $\{1^{\ast}\}$ &  $\emptyset^{3}_{2}$   \\
\hline  
\end{tabular}
\caption{\label{tab:matrix2}Integer Space $3^{\{1,2\}}$.}
\end{table}

 \end{proof}

We now present a new conjecture.

\begin{conjecture}\label{CJ BUCLE}
     Let $A\in 2^{\mathds{N}}$, then $(3^{A},\oplus)$ satisfies the following conditions:
\begin{enumerate}
    \item $(\forall X,Y\in 3^{A})(X\oplus Y\in 3^{A}),$
    \item $(\exists! \emptyset\in 3^{A})(\forall X\in 3^{A})(X\oplus \emptyset=\emptyset\oplus X=X),$
    \item $(\forall X\in 3^{A})(\exists! X^{-}\in 3^{A})(X\oplus X^{-}=X^{-}\oplus X=\emptyset),$
    \item $(\forall X,Y\in 3^{A})(X\oplus Y=Y\oplus X).$
\end{enumerate}
\end{conjecture}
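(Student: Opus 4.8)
The plan is to give first an explicit description of $3^{A}$ as a collection of $\sigma$-sets and then to read the four conditions off it. Write $A=\{a_{i}\}_{i\in I}\subseteq\mathds{N}$, so $A^{-}=\{a_{i}^{\ast}\}_{i\in I}$, and let me say that a $\sigma$-set $Z$ is \emph{$A$-bounded} if each of its elements lies in $A$ or in $A^{-}$. The claim to prove is
$$3^{A}=\bigl\{\,Z : Z\text{ is an }A\text{-bounded }\sigma\text{-set}\,\bigr\}.$$
For the inclusion $\subseteq$, take $X\in 2^{A}$ and $Y\in 2^{A^{-}}$; since $X\divideontimes Y\subseteq X\subseteq A$ and $Y\divideontimes X\subseteq Y\subseteq A^{-}$, every element of $X\oplus Y$ lies in $A$ or in $A^{-}$, so $X\oplus Y$ is $A$-bounded; moreover it contains no pair $a_{i},a_{i}^{\ast}$, because $a_{i}\in X\oplus Y$ forces $a_{i}\in X$ and $a_{i}^{\ast}\notin Y$, which already prevents $a_{i}^{\ast}\in X\oplus Y$, and hence $X\oplus Y$ is a $\sigma$-set. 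For the reverse inclusion, let $Z$ be an $A$-bounded $\sigma$-set and set $P=Z\cap A$, $M=Z\cap A^{-}$. By exclusion of inverses no $x$ and $x^{\ast}$ both lie in $Z$, so $P\widehat{\cap}M=M\widehat{\cap}P=\emptyset$; hence $P\divideontimes M=P$, $M\divideontimes P=M$, and $Z=P\oplus M$ with $P\in 2^{A}$ and $M\in 2^{A^{-}}$.

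Granting this description, condition $(4)$ is just the commutativity of $\oplus$ noted after its definition. Condition $(1)$ follows at once: if $X,Y\in 3^{A}$ then every element of $X\oplus Y$ lies in $X$ or in $Y$, hence in $A\cup A^{-}$, so $X\oplus Y$ is $A$-bounded, and it is a $\sigma$-set, so $X\oplus Y\in 3^{A}$. For condition $(2)$, observe $\emptyset\in 3^{A}$ (take $X=Y=\emptyset$), and Corollary \ref{C SV} gives $X\oplus\emptyset=\emptyset\oplus X=X$ for all $X\in 3^{A}$; if $E\in 3^{A}$ were another two-sided identity, then $E=E\oplus\emptyset=\emptyset$, which gives uniqueness.

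The remaining condition $(3)$ is where the work lies. Given $X\in 3^{A}$, put $X^{-}=\{x^{\ast}:x\in X\}$; because $x\mapsto x^{\ast}$ interchanges $A$ and $A^{-}$, the $\sigma$-set $X^{-}$ is again $A$-bounded, so $X^{-}\in 3^{A}$. Every $x\in X$ has $x^{\ast}\in X^{-}$, hence $X\widehat{\cap}X^{-}=X$ and $X^{-}\widehat{\cap}X=X^{-}$, so $X\divideontimes X^{-}=X^{-}\divideontimes X=\emptyset$ and consequently $X\oplus X^{-}=X^{-}\oplus X=\emptyset$. For uniqueness, suppose $W\in 3^{A}$ and $X\oplus W=\emptyset$; then $X\divideontimes W=\emptyset$, which forces $X=X\widehat{\cap}W$, i.e. $x^{\ast}\in W$ for every $x\in X$, so every element of $X^{-}$ lies in $W$; symmetrically $W\divideontimes X=\emptyset$ forces every element of $W^{-}$ to lie in $X$, i.e. every element of $W$ to lie in $X^{-}$. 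Hence $W=X^{-}$, so the $\sigma$-antiset of $X$ inside $3^{A}$ is unique.

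I expect the main obstacle to be bookkeeping rather than idea: one must pin down, from the axioms of \cite{Gatica}, that the fusion $X\oplus Y$ of two $\sigma$-sets is genuinely a $\sigma$-set and not merely a proper $\sigma$-class — the crux being that $X\divideontimes Y$ and $Y\divideontimes X$ carry no mutually annihilating pair, so their union is harmless — and that this holds uniformly for arbitrary, possibly infinite, $A\in 2^{\mathds{N}}$. No finiteness of $A$ is used anywhere: every step above is elementwise, so once the description of $3^{A}$ from the first paragraph is in place, conditions $(1)$ through $(4)$ follow as indicated.
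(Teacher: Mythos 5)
The statement you have proved is presented in the paper only as Conjecture \ref{CJ BUCLE}: the authors verify it solely for $A=\{1,2\}$ (Theorem \ref{T Alg}) by exhibiting the full fusion matrix, and for $|A|=3$ by the analogous table, so there is no general proof in the paper to compare against. Your argument is therefore a genuinely different — and far more general — route: the key new ingredient is the structural identification of $3^{A}$ with the collection of $A$-bounded $\sigma$-sets, obtained by splitting any such $Z$ as $Z=(Z\cap A)\oplus(Z\cap A^{-})$ and using exclusion of inverses to see that the two pieces do not $\ast$-intersect. Once that characterization is in hand, closure, the identity, the unique inverse $X^{-}=\{x^{\ast}:x\in X\}$, and commutativity all follow elementwise, with no appeal to finiteness of $A$; as a bonus, for finite $A$ the same characterization immediately yields $|3^{A}|=3^{|A|}$ (each element of $A$ is independently absent, present, or present starred), settling the paper's cardinality conjecture as well. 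Your reasoning is correct at the level of rigor the paper operates at; the one point you rightly flag as resting on the axioms of \cite{Gatica} rather than on anything in this paper is that a class all of whose members lie in $A\cup A^{-}$ and which contains no pair $x,x^{\ast}$ is indeed a $\sigma$-set (so that, e.g., $X\oplus Y$ in condition $(1)$ is a $\sigma$-set and not a proper $\sigma$-class), and that $(x^{\ast})^{\ast}=x$ for the elements of $\mathds{N}$. Granting those foundational facts, what the paper leaves as a conjecture your argument actually proves; what the brute-force tables buy instead is independence from any such foundational assumptions, at the cost of covering only the two smallest cases.
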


\section{$\sigma$-Sets Equations}

Continuing with the analysis of the $\sigma$-sets, we now have the development of the equations of $\sigma$-sets of a $\sigma$-set variable, equations that play a very important role when solving a $\sigma$-set equation, now let's define and go deeper into the $\sigma$-sets variables.

We must remember that for every $\sigma$-set $A$ and $B$, the fusion of both is defined as:

$$A\oplus B=\{x: x\in A\divideontimes B \vee x\in B\divideontimes A  \}$$
 
\begin{definition}
 Let $A$ be a $\sigma$-set, then $A$ is said to be an entire $\sigma$-set if there exists the $\sigma$-antiset $A^{-}$.   
\end{definition}

\begin{example}
    Let $A=\{1_{0},2_{0},3_{0}\}$, then this $\sigma$-set is not an integer, since $A^{-}$ does not exist, on the other hand the $\sigma$-set $A=\{1,2,3,4\}$, is an integer $\sigma$-set since $A^{-}=\{1^{\ast},2^{\ast},3^{\ast},4^{\ast}\}$ exists which is the $\sigma$-antiset of $A$.
\end{example}

It is clear that if a $\sigma$-set $A$ is integer, then by definition there exists the integer space $3^{A}$. We should also note that if $A$ is an integer $\sigma$-set, then $\left[A\cup A^{-}\right]$ is a proper $\sigma$-class, for example, consider $A=\{1,2\}$, then $\left[A\cup A^{-}\right]=\left[1,2,1^{\ast},2^{\ast}\right]$, is a proper $\sigma$-class.
We must observe that $\sigma$-set theory \cite{Gatica} is a theory of $\sigma$-classes, where $\sigma$-sets are characterized by axioms. We must also note that a proper $\sigma$-class is a $\sigma$-class that is not a $\sigma$-set. This difference is essential to give rise to the existence of antielements along with their respective elements.

\begin{definition}
    Let $A$ be a integer $\sigma$-set such that $|A|=n$, then $X$ is said to be a $\sigma$-set variable of $3^{A}$, if and only if
$$X=\{x_{1},x_{2},x_{3},\ldots ,x_{m}\},$$
where $m\leq n$ and $x_{i}$ a variable of the proper class $\left[A\cup A^{-}\right]$.
\end{definition}

\begin{example}
    Let $A=\{1,2,3\}$ be a $\sigma$-set, it is clear that $A$ is an entire $\sigma$-set since there exists $A^{-}=\{1^{\ast},2^{\ast},3^{\ast}\}$ and therefore $3^{A}$, in this way we will have that 

$$X=\emptyset,$$    
$$X=\{x\},$$
$$X=\{x_{1},x_{2}\},$$
$$X=\{x_{1},x_{2},x_{3}\},$$
are $\sigma$-sets variables of $3^{A}$, where $x,x_{1},x_{2},x
_{3}\in \left[1,2,3,1^{\ast},2^{\ast},3^{\ast}\right]$.
\end{example}

\begin{lemma}\label{L VC}
Let $A$ be an integer $\sigma$-set and $X$ a $\sigma$-set variable of $3^{A}$, then $A\oplus X=A\cup X$, with $A\subset A\cup X$ and $X\subset A\cup X$.
\end{lemma}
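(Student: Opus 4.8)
The plan is to imitate the proof of Theorem~\ref{T fusion-union}. First I would unfold the fusion operator,
$$A\oplus X=\{x: x\in A\divideontimes X \vee x\in X\divideontimes A\},$$
and note that, by the definition of the $\ast$-difference, the identity $A\oplus X=A\cup X$ will follow as soon as one proves $A\divideontimes X=A$ and $X\divideontimes A=X$, which in turn reduce to the two vanishing conditions
$$A\widehat{\cap}X=\emptyset \qquad\text{and}\qquad X\widehat{\cap}A=\emptyset.$$

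The heart of the matter is establishing these two equalities. For $A\widehat{\cap}X=\{x\in A: x^{\ast}\in X\}$ I would argue by contradiction: if some $x\in A$ had $x^{\ast}\in X$, then, since $X=\{x_{1},x_{2},\ldots,x_{m}\}$ is a $\sigma$-set variable of $3^{A}$, its members are variables $x_{i}$ ranging over the proper $\sigma$-class $[A\cup A^{-}]$, and an uninstantiated variable cannot be identified with the concrete antielement $x^{\ast}$, a contradiction; the symmetric reasoning shows that no $x_{i}\in X$ can satisfy $x_{i}^{\ast}\in A$, so $X\widehat{\cap}A=\emptyset$ as well. This is precisely the place where the notion of a \emph{$\sigma$-set variable} does the work: because the $x_{i}$ have not yet been substituted by specific members of $[A\cup A^{-}]$, none of the annihilation (completion) axioms of \cite{Gatica} are triggered, and the fusion of $A$ with $X$ reduces to an ordinary union. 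I expect this to be the main obstacle, since it is the only step that is not purely formal and it requires being careful about the status of $\sigma$-set variables as opposed to instantiated $\sigma$-sets.

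Once the two $\ast$-intersections are known to be empty, the rest is routine: $A\divideontimes X=A-(A\widehat{\cap}X)=A-\emptyset=A$ and, in the same way, $X\divideontimes A=X$, whence
$$A\oplus X=\{x: x\in A \vee x\in X\}=A\cup X.$$
The two inclusions $A\subset A\cup X$ and $X\subset A\cup X$ are then immediate from the definition $A\cup X=\{x: x\in A\vee x\in X\}$. I would finish with a short remark, in the spirit of the discussion following Theorem~\ref{T IA}, pointing out that the passage from the fusion to the union here is exactly the controlled situation in which $[A\cup A^{-}]$ is a proper $\sigma$-class, so that no conflict with the exclusion-of-inverses principle arises.
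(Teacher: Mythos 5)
Your proposal follows the same route as the paper's proof: unfold the fusion, show $A\divideontimes X=A$ and $X\divideontimes A=X$, and conclude $A\oplus X=A\cup X$ with the two inclusions. The only difference is that you spell out \emph{why} the $\ast$-intersections vanish (the members of $X$ are uninstantiated variables over $[A\cup A^{-}]$), a step the paper compresses into the phrase ``since $X$ is a $\sigma$-set variable,'' so your argument is, if anything, slightly more complete.
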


\begin{proof}
   Let $A$ be an integer $\sigma$-set and $X$ a $\sigma$-set variable of $3^{A}$, then
$$A\oplus X=\{x: x\in A\divideontimes X \vee x\in X\divideontimes A \} $$
Now we have that
$$A\divideontimes X=A$$
and
$$X\divideontimes A=X$$
since $X$ is a $\sigma$-set variable, therefore we will have that
$$A\oplus X=\{x: x\in A \vee x\in X \}=A\cup X.$$
We can also observe that $A\cap X=\emptyset$ since $X$ is a $\sigma$-set variable, therefore $A\subset A\cup X$ and $X\subset A\cup X$.
\end{proof}

\begin{example}
    Let $A=\{1,2,3\}$, and $X$ be a $\sigma$-set variable of $3^{A}$, that is,
$$X=\emptyset$$
$$X=\{x\},$$
$$X=\{x_{1},x_{2}\},$$
$$X=\{x_{1},x_{2},x_{3}\},$$
are $\sigma$-sets variable of $3^{A}$, where $x,x_{1},x_{2},x
_{3}\in \left[1,2,3,1^{\ast},2^{\ast},3^{\ast}\right]$.
then
    $$A\oplus X=\{1,2,3\},$$
    $$A\oplus X=\{1,2,3,x\},$$
    $$A\oplus X=\{1,2,3,x_{1},x_{2}\},$$
    $$A\oplus X=\{1,2,3,x_{1},x_{2},x_{3}\}$$
\end{example}

After the lemma \ref{L VC} we proceed to analyze some equations of a $\sigma$-set variable and their solutions

Let $A$ be an integer $\sigma$-set, $X$ a $\sigma$-set variable and $M$ and $N$ two $\sigma$-sets of the integer space $3^{A}$, then an equation of a $\sigma$-set variable will be

$$X\oplus M=N.$$

 Now if $M=N$, then the equation becomes
$$X\oplus M=M,$$
and by the corollary \ref{C fusion-union} we have that the solutions are all $X\in 2^{M}$, where we naturally count $X=\emptyset$, hence we have an equation of a $\sigma$-set variable with multiple solutions.

Now consider $M\neq N$, then the $\sigma$-set equation becomes:

 $$X\oplus M=N,$$

 We must remember that the structure in general is not associative, therefore we cannot freely use this property, so to find the solution to the equation we must develop a previous theorem. To develop this theorem we will assume that for every integer $\sigma$-set $A$ the generated space is $\left\langle 2^{A}, 2^{A^{-}} \right\rangle=3^{A}$, and also that $3^{A}$ satisfies conjecture \ref{CJ BUCLE}.  
 
\begin{theorem}\label{T IG}
    Let $A$ be an integer $\sigma$-set, $X$ be a $\sigma$-set variable of $3^{A}$ and $M\in 3^{A}$. Then
$$(X\oplus M)\oplus M^{-}=X$$
\end{theorem}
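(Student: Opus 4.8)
The plan is to reduce everything to the set-theoretic union by exploiting the fact, established in Lemma \ref{L VC}, that a $\sigma$-set variable $X$ of $3^{A}$ is disjoint (even $\ast$-disjoint) from $A$, hence from every element that can appear inside $M\in 3^{A}$ or $M^{-}$. First I would write $M = M_{+}\oplus M_{-}$ informally, where $M_{+}$ collects the "positive" elements of $M$ (those lying in $A$) and $M_{-}$ the anti-elements (those in $A^{-}$); more carefully, since $M\in 3^{A}=\langle 2^{A},2^{A^{-}}\rangle$ there exist $P\in 2^{A}$ and $Q\in 2^{A^{-}}$ with $M=P\oplus Q$, and by Theorem \ref{T IA} no element of $M$ can have its anti-element also in $M$. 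I would then unwind $X\oplus M$ directly from the definition of $\oplus$: because $X$ is a $\sigma$-set variable of $3^A$, its elements come from the proper $\sigma$-class $[A\cup A^{-}]$, but the key point from Lemma \ref{L VC} is $X$ behaves "transparently" — $X\divideontimes M = X$ and (the nontrivial half) $M\divideontimes X = M$ provided none of $X$'s elements annihilates an element of $M$. So the first key step is to justify that $X\widehat{\cap}M=\emptyset$ and $M\widehat{\cap}X=\emptyset$, giving $X\oplus M = X\cup M$.

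Second, I would handle $(X\cup M)\oplus M^{-}$. Here the annihilation is genuine: $M\oplus M^{-}=\emptyset$ by the definition of $\sigma$-antiset (applied to $M$, which is an integer element of $3^A$, using that $3^A$ satisfies Conjecture \ref{CJ BUCLE}, in particular part (3) guaranteeing $M^{-}$ exists and is unique in $3^A$). The crucial observation is that the elements of $X$ are not touched by this annihilation: since $X$ is $\ast$-disjoint from both $M$ and $M^{-}$ (same argument as step one, applied to $M^{-}$), we get $(X\cup M)\widehat{\cap}M^{-} = M\widehat{\cap}M^{-} = M^{-}$ (as sets of elements whose anti-elements lie in $M^{-}$), so $(X\cup M)\divideontimes M^{-} = (X\cup M) - M^{\ast\text{-part}} = X$, while $M^{-}\divideontimes(X\cup M) = M^{-} - M^{-} = \emptyset$. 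Combining via the definition of $\oplus$ yields $(X\cup M)\oplus M^{-} = X\cup\emptyset = X$, which is the claim.

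A cleaner way to organize the same computation, which I would actually use in the write-up, is to avoid associativity entirely and instead prove directly that $(X\oplus M)\widehat{\cap}M^{-}$ consists precisely of the anti-elements contributed by $M$, and symmetrically. Concretely: write $X\oplus M = X\cup M$ from step one; then compute $(X\cup M)\oplus M^{-}$ by noting that for each element $m\in M$ its anti-element $m^{\ast}\in M^{-}$, so every element of $M$ is removed in forming $(X\cup M)\divideontimes M^{-}$, and conversely every element of $M^{-}$ has its anti-element in $M\subseteq X\cup M$, so $M^{-}\divideontimes(X\cup M)=\emptyset$; the elements of $X$ survive because they have no anti-elements in $M^{-}$ (as $X$ is a $\sigma$-set variable and $M^{-}\in 3^A$). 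Hence the union in the definition of $\oplus$ returns exactly $X$.

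The main obstacle I anticipate is purely bookkeeping rather than conceptual: one must be careful that "$X$ is a $\sigma$-set variable of $3^A$" really does force $X$ to be $\ast$-disjoint from every $M\in 3^A$ and every $M^{-}$, not merely from $A$ itself — this needs the fact that every element appearing in any member of $3^A$ lies in $A\cup A^{-}$, together with Theorem \ref{T IA} / the Exclusion of Inverses (Theorem 3.39 of \cite{Gatica}) to rule out an element of $X$ coinciding with the anti-element of an element of $M$. A secondary subtlety is the status of the various $\sigma$-emptysets $\emptyset^{i}_{j}$: one should note that for the purposes of this identity $\emptyset = \emptyset^{-} = \emptyset^{i}_{j}$ as a $\sigma$-set, so $X\cup\emptyset = X$ without ambiguity. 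Once these points are settled the computation is a direct unfolding of the definitions of $\widehat{\cap}$, $\divideontimes$, and $\oplus$, with no appeal to associativity needed.
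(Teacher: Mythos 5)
Your proposal is correct and follows essentially the same route as the paper: apply Lemma \ref{L VC} to get $X\oplus M=X\cup M$, then unfold $\widehat{\cap}$, $\divideontimes$, and $\oplus$ directly to show $(X\cup M)\divideontimes M^{-}=X$ and $M^{-}\divideontimes(X\cup M)=\emptyset$, with no appeal to associativity. Only note the small slip in your second paragraph where you write $(X\cup M)\widehat{\cap}M^{-}=M^{-}$; by the definition of $\widehat{\cap}$ this set is $M$ (the elements of $X\cup M$ whose anti-elements lie in $M^{-}$), exactly as you state correctly in your reorganized version, and your extra care in justifying that $X$ is $\ast$-disjoint from arbitrary $M\in 3^{A}$ (not just from $A$) actually patches a point the paper's own proof leaves implicit.
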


\begin{proof}
 Let $A$ be an integer $\sigma$-set, $X$ be a $\sigma$-set variable of $3^{A}$ and $M\in 3^{A}$, then by lemma \ref{L VC} we have that $X\oplus M=X\cup M$, with $X\cap M=\emptyset$.

 Therefore we have that
 $$ \circledast \ (X\oplus M)\oplus M^{-}=\{a: a\in(X\oplus M)\divideontimes M^{-}\vee a\in M^{-}\divideontimes (X\oplus M)\}$$
 $$ =\{a: a\in(X\cup M)\divideontimes M^{-}\vee a\in M^{-}\divideontimes (X\cup M)\} $$
 so
 
 $(X\cup M)\divideontimes M^{-}= (X\cup M)-(X\cup M)\widehat{\cap} M^{-}=(X\cup M)-M=X,$
 
 and

 $M^{-}\divideontimes(X\cup M)= M^{-}-M^{-}\widehat{\cap} (X\cup M)=M^{-}-M^{-}=\emptyset.$

Now replacing these calculations in $(\circledast)$ we will have that

$$(X\oplus M)\oplus M^{-}=\{a: a\in X \vee a\in\emptyset\}$$
$$(X\oplus M)\oplus M^{-}=\{a: a\in X\},$$
$$(X\oplus M)\oplus M^{-}=X.$$
\end{proof}

Now, after theorem \ref{T IG} has been proved, we can solve some $\sigma$-set equation for the integer $\sigma$-set $A=\{1,2\}$, since the generated space is effectively equal to $3^{A}$, that is, $\left\langle 2^{A}, 2^{A^{-}} \right\rangle=3^{A}$, and also $3^{A}$ is a non-associative abelian loop.

Let $A=\{1,2\}$ be an integer set and $M,N\in 3^{A}$, with $M\widehat{\cap} N=\emptyset$, then the equation
 $$X\oplus M=N,$$
has the following solution
$$X\oplus M=N \setminus \oplus M^{-},$$
$$(X\oplus M)\oplus M^{-}=N\oplus M^{-},$$

then by theorem \ref{T IG} we will have that 

$$X=N\oplus M^{-}.$$

Let us now show a concrete example for $A=\{1,2\}$.

\begin{example}
    Let $A=\{1,2\}$ be an integer $\sigma$-set, $M=\{1,2^{\ast}\}$ and $N=\{1\}$, with $M\widehat{\cap}N=\emptyset$, then the equation of a $\sigma$-set variable 
    $$X\oplus \{1,2^{\ast}\}=\{1\}$$
has the following solution.
$$X\oplus \{1,2^{\ast}\}=\{1\} \setminus \oplus \{1^{\ast},2\},$$
$$(X\oplus \{1,2^{\ast}\})\oplus \{1^{\ast},2\}=\{1\}\oplus \{1^{\ast},2\},$$
$$X=\{2\}.$$

Here we can see that the equation has as solution the $\sigma$-set $S_{1}=\{2\}$, since
$$\{2\}\oplus \{1,2^{\ast}\}=\{1\},$$
but like the equation $X\oplus M=M$, this one does not have a unique solution since the $\sigma$-set $S_{2}=\{1,2\}$, is also a solution for the equation of a $\sigma$-set variable,
$$\{1,2\}\oplus\{1,2^{\ast}\}=\{1\}.$$

In this way we have two solutions for our equation of a $\sigma$-set variable which are:
$$ S =\{S_{1},S_{2}\}=\{ \{2\}, \{1,2\}\}.$$
\end{example}

Note that if $M\widehat{\cap}N=\emptyset$ then the $\sigma$-set equation has a solution, but otherwise the $\sigma$-set equation has an empty solution.

\begin{example}
     Let $A=\{1,2\}$ be an integer $\sigma$-set, $M=\{1^{\ast}\}$ and $N=\{1\}$, with $M\widehat{\cap}N=\{1^{*}\}$, then the equation of a $\sigma$-set variable 
    $$X\oplus \{1^{\ast}\}=\{1\}$$
There is no solution.
$$X\oplus \{1^{\ast}\}=\{1\} \setminus \oplus \{1\},$$
$$(X\oplus \{1^{\ast}\})\oplus \{1\}=\{1\}\oplus \{1\},$$
$$X=\{1\},$$
which is a contradiction, because
$$\{1\}\oplus \{1^{\ast}\}=\{1\},$$
$$\emptyset=\{1\}.$$

\end{example}

\begin{definition} \label{fusionableDef}
    A \(\sigma\)-set equation \(X \oplus M=N\) is said to be {\bf fusionable} if \(M \widehat{\cap} N=\emptyset\).
\end{definition}

With this in mind, let us conclude with a bounded theorem to find some solutions of the \(\sigma\)-set equation.

\begin{theorem} \label{theoremSolutionsS1S2}
    Let $A$ be an integer $\sigma$-set, $X$ a $\sigma$-set variable of $3^{A}$, and $M,N\in 3^{A}$, then two possible solutions \(S=\{S_1,S_2\}\) of the fusionable equation
$$X\oplus M=N,$$
are  $S_1=N \oplus R^{-}$ and $S_2=R^{-}$, where $R:=M \oplus N^{-}.$
\end{theorem}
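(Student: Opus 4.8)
The plan is to verify directly that $S_1 = N \oplus R^-$ and $S_2 = R^-$ each satisfy $X \oplus M = N$, where $R := M \oplus N^-$. The key observation is that Theorem \ref{T IG} already gives us the tool to solve equations of the shape $X \oplus M = N$ when the equation is fusionable: from $X \oplus M = N$ one applies $M^-$ to both sides and invokes $(X \oplus M) \oplus M^- = X$ to conclude $X = N \oplus M^-$. So the most natural candidate solution is $N \oplus M^-$ directly; the content of this theorem is that the \emph{rewritten} forms $N \oplus R^-$ and $R^-$ are also solutions, and the work is to unwind the definition of $R$ and check the fusion identities.

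First I would record that, since the equation is fusionable, $M \widehat{\cap} N = \emptyset$, and hence (by Theorem \ref{T fusion-union} applied inside the appropriate $2^X$, or directly from the definitions of $\divideontimes$ and $\oplus$) the fusion $M \oplus N^-$ behaves well: I would compute $R = M \oplus N^-$ explicitly in terms of $M$ and $N^-$, and then compute $R^-$. The antiset operation interacts with $\oplus$ in the expected way on integer $\sigma$-sets, so $R^- = (M \oplus N^-)^- = M^- \oplus N$; here I would lean on the $\sigma$-set–$\sigma$-antiset duality $(A^-)^- = A$ and the commutativity of $\oplus$. Once $R^-$ is in hand, I would substitute $S_2 = R^- = M^- \oplus N$ into $X \oplus M$ and simplify, using associativity-type cancellation only where Theorem \ref{T IG} licenses it — that is, I would write $S_2 \oplus M = (N \oplus M^-) \oplus M$ and apply Theorem \ref{T IG} with the roles arranged so the cancellation of $M^-$ against $M$ is valid, obtaining $N$. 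For $S_1 = N \oplus R^-$ I would substitute similarly, getting $(N \oplus R^-) \oplus M$, rewrite $R^-$ and regroup the innermost fusion so that Theorem \ref{T IG} again cancels a matched pair, landing on $N$.

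The main obstacle is the non-associativity of $\oplus$: every regrouping step must be justified either by an explicit element-chase through the definitions of $\widehat{\cap}$, $\divideontimes$, and $\oplus$, or by an appeal to Theorem \ref{T IG} (which is the only cancellation law available). So the delicate point is checking that in the expressions $(N \oplus M^-) \oplus M$ and $(N \oplus (M^- \oplus N)) \oplus M$ the parenthesization actually matches the hypothesis of Theorem \ref{T IG} — i.e. that the "$X$" playing the role in that theorem really is a $\sigma$-set variable of $3^A$ with the required disjointness, and that the "$M$" being cancelled appears in the outer fusion with its antiset on the correct side. I expect that the fusionability hypothesis $M \widehat{\cap} N = \emptyset$ is exactly what guarantees these disjointness conditions, so the proof reduces to: (i) compute $R^-$ via duality, (ii) apply Theorem \ref{T IG} twice with carefully chosen substitutions, and (iii) note that the two resulting solutions need not be distinct and need not exhaust all solutions, matching the phrasing "two possible solutions."
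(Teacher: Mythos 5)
Your overall strategy (direct verification that $S_1\oplus M=N$ and $S_2\oplus M=N$, after computing $R^{-}=(M\oplus N^{-})^{-}=M^{-}\oplus N$ by duality) is reasonable, but the central cancellation step does not go through as planned. Theorem \ref{T IG} rests on Lemma \ref{L VC}, which requires the operand playing the role of the variable $X$ to have \emph{no interaction whatsoever} with the $\sigma$-set being cancelled, i.e. $X\widehat{\cap}M=M\widehat{\cap}X=\emptyset$ so that $X\oplus M=X\cup M$. To evaluate $S_2\oplus M=(N\oplus M^{-})\oplus M$ by that theorem you would need $N\widehat{\cap}M^{-}=\emptyset$, which says that no element of $N$ occurs in $M$ \emph{with the same sign}. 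Fusionability gives the opposite kind of condition: $M\widehat{\cap}N=\emptyset$ forbids a common underlying element carrying \emph{opposite} signs in $M$ and $N$, and says nothing about common elements with equal signs. The paper's own worked example has $N=\{1,2\}$ with $1,2\in M$, so $N\widehat{\cap}M^{-}=\{1,2\}\neq\emptyset$ and Theorem \ref{T IG} is simply not applicable there --- yet the conclusion still holds, because when $a$ lies in both $M$ and $N$ with the same sign, the copy of $a$ in $R^{-}$ has already been annihilated in forming $R=M\oplus N^{-}$, and the surviving occurrence of $a$ in $M$ re-emerges in $S_2\oplus M$. That cancellation happens for a reason Theorem \ref{T IG} does not capture, so your proof must fall back on the element-by-element chase through $\widehat{\cap}$, $\divideontimes$ and $\oplus$ that you mention only as an alternative: for each underlying element $a$ there are nine sign-patterns for $(M,N)$, of which fusionability excludes the two opposite-sign ones, and one checks the remaining seven directly for both $S_1$ and $S_2$.

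You should also not try to reproduce the paper's displayed computation, because it is itself erroneous: it asserts $N\oplus(N\oplus M^{-})=N\oplus M^{-}$, which would force $S_1=S_2$ and is contradicted by the paper's own subsequent example (where $S_1=\{1,2,3,4,5^{\ast},6\}\neq\{3,4,5^{\ast},6\}=S_2$); after that it merely declares both to be solutions. So relative to the paper you are not missing any idea the authors actually supply --- but as written your appeal to Theorem \ref{T IG} is a genuine gap, since its disjointness hypothesis is not implied by (and in typical cases is incompatible with) fusionability.
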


\begin{proof}
    For the first solution \(S_1\) we have that 
\begin{eqnarray*}
	S_1 &=& N \oplus R^{-} \\
	    &=& N \oplus (M \oplus N^{-})^{-} \\
	    &=& N \oplus (N \oplus M^{-}) \\
	    &=& N \oplus M^{-},
\end{eqnarray*}
where \(S_2=(M \oplus N^{-})^{-}=N \oplus M^{-}\) because of the result iteration seen above. Hence both results are actually a fusion solution for \(X \oplus M=N,\) where \(S_2=R^{-}\) is an exact solution and \(S_1=N \oplus R^{-}\) is an intersected rest solution. Because of \(M \widehat{\cap}N=\emptyset\) (Definition \ref{fusionableDef}) as the equation \(X \oplus M=N\) is fusionable, both \(S_1 \oplus M\) and \(S_2 \oplus M\) will be fusionable into another \(\sigma\)-set \(N\).
\end{proof}

As we looked above, the solution space is reduced such that the solutions are indeed \(N \oplus M^{-},\) being by consequence possible solutions for the fusionable equation \(X \oplus M=N\).

\begin{example}
    Let \(A=\{1,2,3,4,5,6\}\) be an integer $\sigma$-set, $M=\{1,2,3^{*},4^{*},5,6^{*}\}$ and $N=\{1,2\}$, then the equation of a $\sigma$-set variable
\[
 X \oplus \{1,2,3^{*},4^{*},5,6^{*}\}=\{1,2\},
\]
which is fusionable because \(M \widehat{\cap}N=\{1,2,3^{*},4^{*},5,6^{*}\} \widehat{\cap}\{1,2\}=\emptyset.\) 

Now, by using Theorem \ref{theoremSolutionsS1S2}, let us first obtain
\begin{eqnarray*}
	R^{-} &=& (M \oplus N^{-})^{-} \\
	  &=& (\{1,2,3^{*},4^{*},5,6^{*}\} \oplus \{1,2\}^{-})^{-} \\
        &=& (\{1,2,3^{*},4^{*},5,6^{*}\} \oplus \{1^{*},2^{*}\})^{-} \\
	    &=& (\{3^{*},4^{*},5,6^{*}\})^{-} \\
	  &=& \{3,4,5^{*},6\},
\end{eqnarray*}
so we get \(S_1=N \oplus R^{-}=\{1,2,3,4,5^{*},6\}\) and \(S_2=R^{-}=\{3,4,5^{*},6\}\), which can be easily proved that both solutions gives \(S_1 \oplus M=S_2 \oplus M=N\) as a resulting \(\sigma\)-set. Hence \(S=\{\{1,2,3,4,5^{*},6\},\{3,4,5^{*},6\}\}\) is a solution set for the fusionable equation \(X \oplus M=N\).
\end{example}

\section{Conclusions}
One of the first conclusions we can draw is that the fusion operator $\oplus$ for $\sigma$sets is equivalent to the union operator for sets within the context of the set of parts $2^{A}$, which allows us to deduce that the fusion of $\sigma$-sets is an extension of the union for the generated space.

The fact that the integer space $3^{A}$ presents a cardinal of power 3, is very important for the development of the theory of transfinite numbers, since in general the power set $2^{A}$ that goes to the power of 2 is used; in this way our results can serve as an impetus for the development of the theory of transfinite numbers.

We can also conclude that the algebraic structure of the integer space $3^{1,2}$ is a loop, which leads us to conjecture that the integer space in general has a loop structure. This fact is relevant to $\sigma$-set theory since, if it were so, it would show that the fusion operator $\oplus$ is not associative which is relevant for solving set equations.

As a final conclusion, we can state that we can generate $\sigma$-set equations given the existence of inverses for the fusion operator $\oplus$ in the integer space, but in the general case, solutions are not given, so a condition must be imposed on the $\sigma$-sets of the equation. We have not yet conducted a detailed study on the number of solutions to each set equation, leaving this study for future research.

To see more works in which antisets or $\sigma$-antiset are used or in which equation $A\cup B=\emptyset$ is described, visit the references \cite{Bustamante11}, \cite{Bustamante16}, \cite{Chunlin}, \cite{Gatica}, \cite{Sengupta}.


\end{document}